\newtheorem{thm}{Theorem}[section]
\newtheorem{lem}[thm]{Lemma}
\newtheorem{cor}{Corollary}
\theoremstyle{definition}
\renewcommand{\hat}{\widehat}
\renewcommand{\tilde}{\widetilde}
\definecolor{newcolor}{rgb}{.8,.349,.1}
\newcommand{\logLogSlopeTriangle}[5]
{
	\pgfplotsextra
	{
		\pgfkeysgetvalue{/pgfplots/xmin}{\xmin}
		\pgfkeysgetvalue{/pgfplots/xmax}{\xmax}
		\pgfkeysgetvalue{/pgfplots/ymin}{\ymin}
		\pgfkeysgetvalue{/pgfplots/ymax}{\ymax}
		
		\pgfmathsetmacro{\xArel}{#1}
		\pgfmathsetmacro{\yArel}{#3}
		\pgfmathsetmacro{\xBrel}{#1-#2}
		\pgfmathsetmacro{\yBrel}{\yArel}
		\pgfmathsetmacro{\xCrel}{\xArel}
		
		\pgfmathsetmacro{\lnxB}{\xmin*(1-(#1-#2))+\xmax*(#1-#2)} 
		\pgfmathsetmacro{\lnxA}{\xmin*(1-#1)+\xmax*#1} 
		\pgfmathsetmacro{\lnyA}{\ymin*(1-#3)+\ymax*#3} 
		\pgfmathsetmacro{\lnyC}{\lnyA+#4*(\lnxA-\lnxB)}
		\pgfmathsetmacro{\yCrel}{\lnyC-\ymin)/(\ymax-\ymin)} 
		
		\coordinate (A) at (rel axis cs:\xArel,\yArel);
		\coordinate (B) at (rel axis cs:\xBrel,\yBrel);
		\coordinate (C) at (rel axis cs:\xCrel,\yCrel);
		
		\draw[#5]   (A)-- node[pos=0.5,anchor=north] {}
		(B)-- 
		(C)-- node[pos=0.5,anchor=west] {\tiny{#4}}
		cycle;
	}
}
\journal{Journal of Computational Physics}
\newcommand{\note}[1]{\textcolor{black}{#1}}
\newcommand{\LRp}[1]{\left(#1\right)}
\begin{document}


\begin{frontmatter}

\title{Bernstein-B\'ezier weight-adjusted discontinuous Galerkin methods for wave propagation in heterogeneous media}%

\author[1]{Kaihang {Guo}\corref{cor1}}
\author[1]{Jesse {Chan}}
\cortext[cor1]{Corresponding author: 
		Email: kaihang.guo@rice.edu; Tel.: +1-281-702-8829; }

\address[1]{Department of Computational and Applied Mathematics, Rice University, 6100 Main St, Houston, TX 77005, United States}

\begin{abstract}
This paper presents an efficient discontinuous Galerkin method to simulate wave propagation in heterogeneous media with sub-cell variations. This method is based on a weight-adjusted discontinuous Galerkin method (WADG), which achieves high order accuracy for arbitrary heterogeneous media \cite{chan2017weight}. However, the computational cost of WADG grows rapidly with the order of approximation. In this work, we propose a Bernstein-B\'ezier weight-adjusted discontinuous Galerkin method (BBWADG) to address this cost.  \note{By approximating sub-cell heterogeneities by a fixed degree polynomial, the main steps of WADG can be expressed as polynomial multiplication and $L^2$ projection, which we carry out using fast Bernstein algorithms.  The proposed approach reduces the overall computational complexity from $O(N^{2d})$ to $O(N^{d+1})$ in $d$ dimensions.  Numerical experiments illustrate the accuracy of the proposed approach, and computational experiments for a GPU implementation of BBWADG verify that this theoretical complexity is achieved in practice.}
\end{abstract}

\begin{keyword}
 discontinuous Galerkin, Bernstein, high order, heterogeneous media, GPU
\end{keyword}

\end{frontmatter}

 
\section{Introduction}
Efficient and accurate simulations of wave propagation are central to applications in seismology, where heterogeneities arise from the presence of different geological structures in the subsurface. Accurate and efficient numerical methods for wave problems are becoming more and more important as the demand for solutions of large-scale problems increases.  This paper presents an efficient discontinuous Galerkin (DG) method for wave equations in heterogeneous media with sub-cell variations.  
DG methods combine advantages of the finite volume method and the finite element method, which providing high order accuracy and addressing complex geometries through the use of unstructured meshes. These methods are straightforward to parallelize and can be accelerated by taking advantage of high performance architectures such as Graphics Processing Units (GPUs) \cite{klockner2009nodal}.

High order methods are especially attractive for wave propagation problems. The simulation of wave propagation is observed to be more robust to grid distortion at high orders than at low orders \cite{de2014connections,lorcher2008explicit}, and numerical dispersion and dissipation errors are small for high order approximations \cite{ainsworth2004dispersive}. The goal of this work is to address two issues related to high order DG methods for wave propagation: computational cost at high orders and accurate resolution of media with sub-cell heterogeneities.  Nodal DG methods, which are popular implementations of DG for wave propagation problems \cite{Hesthaven2007}, have a high computational complexity with respect to the order of approximation. We aim to reduce this computational complexity using Bernstein polynomials \cite{farouki2012bernstein}.

Bernstein polynomials have been previously utilized by Ainsworth at el.\ \cite{ainsworth2011bernstein} and Kirby \cite{kirby2011fast} to reduce computational costs associated with high order continuous  finite element methods on simplices.  More recent work has exploited properties of Bernstein polynomials for DG methods.  For example, Kirby introduced a fast algorithm in \cite{kirby2017fast} to invert the local mass matrix in DG schemes by exploiting a recursive block structure \note{present} under a Bernstein basis.  

\note{Chan and Warburton later introduced a Bernstein-B\'ezier discontinuous Galerkin (BBDG) method based on the ``strong'' DG formulation \cite{chan2017gpu}.  In contrast to the approach of Kirby \cite{kirby2017fast}, the use of the ``strong'' formulation avoids explicitly introducing a mass matrix inverse, and instead formulates the DG formulation in terms of differentiation and lifting matrices}.  BBDG exploits the facts that, in $d$ dimensions, \note{the derivative and the lift matrices can be recast as a combination of sparse matrices. By exploiting this structure}, the right-hand side of BBDG can be evaluated in $O(N^d)$ operations \note{per element}.  \note{In comparison, the dense linear algebra of} nodal DG methods generally \note{results in} a computational complexity of $O(N^{2d})$ \note{per element}. 

A separate challenge in the simulation of wave propagation is the \note{approximation of media heterogeneities}. High order finite difference methods are widely used \cite{virieux2011review} in practice, but face challenges for complex geometries and non-smooth media \cite{symes2009interface}. The spectral element method (SEM) \cite{komatitsch1998spectral} provides one alternative to explicit high order finite difference methods.  SEM produces a diagonal global mass matrix, making it well-suited for explicit time-stepping, and can accommodate both complex geometries (through unstructured meshes) and discontinuous media.  However, SEM is restricted to quadrilateral and hexahedral meshes, which are less geometrically flexible  than tetrahedral meshes. Several modifications have been proposed to extend SEM to triangular and tetrahedral meshes, but they require non-standard approximation spaces and do not support arbitrarily high order approximations \cite{chin1999higher}.  

An alternative to triangular and tetrahedral SEM are high order DG methods. High order DG methods can accommodate unstructured triangular and tetrahedral meshes, and naturally result in a block-diagonal global mass matrix, making them amenable to explicit time-stepping schemes and complex geometries. However, in most DG implementations for heterogeneous media, the discretization is based on the assumption that wavespeed is piecewise constant over each element \cite{chan2016gpu}. Fewer DG methods address the case when wavespeed varies within an element. Castro et al.\ \cite{castro2010seismic} addressed sub-element variations in wavespeed by recasting the wave equation as a new linear hyperbolic PDE with variable coefficients and source terms, which are non-zero in the presence of sub-element variations in wavespeed. However, this method introduces additional source terms and stiffness matrices with variable coefficients, resulting in a more complex formulation. Additionally, semi-discrete energy stability is not guaranteed. 

Mercerat and Glinsky \cite{mercerat2015nodal} proposed instead replacing the mass matrix by a weighted mass matrix, where the wavespeed acts as a weight function. The weighted mass matrix is obtained by introducing a set of quadrature points for the material approximation and computing integrals for entries of the mass matrix through quadrature rules. This modification does not require new stiffness matrices or source terms, and can be shown to be energy stable and high order accurate. However, because the wavespeed varies from element to element, each local weighted mass matrix is different. Thus, one needs to store inverses of weighted mass matrices over each element for time-explicit schemes, which significantly increases storage costs. Because GPUs have limited memory, these high storage costs restrict the problem sizes that can be run on a single GPU. Moreover, increased storage costs lead to more data movement, which is becoming increasingly expensive compared to the cost of floating point operations \cite{volkov2016understanding}. 

\note{To address these storage costs, we utilize a weight-adjusted approximation of the weighted mass matrix, whose inverse can be applied in a low-storage manner. The idea of a weight-adjusted approximation to a weighted mass matrix was first introduced as ``reverse numerical integration'' in \cite{koutschan2012computer}, though it was not analyzed in detail.  The idea was independently reintroduced and analyzed by Chan et al.\ in \cite{chan2017weight}.  The key idea is to approximate the weighted $L^2$ inner product using an equivalent weight-adjusted inner product, which produces provably high order accurate and energy stable DG methods with low storage requirements}.  Since WADG only modifies the local mass matrix, it maintains much of the structure of DG methods and is able to reuse existing DG implementations.  

The main \note{computational} step of WADG is the computation of a quadrature-based polynomial $L^2$ projection.  However, the implementation of the quadrature-based $L^2$ projection in WADG requires $O(N^{2d})$ operations, while complexity of BBDG is only $O(N^d)$. Hence, combining BBDG with WADG would result in the cost of the quadrature-based $L^2$ projection dominating the implementation at high polynomial degrees.  The goal of this work is to reduce the computational complexity of WADG at high orders of approximation, which we do using Bernstein bases. We develop an efficient algorithm to implement the polynomial $L^2$ projection in terms of Bernstein coefficients, which leads to a Bernstein-B\'ezier WADG (BBWADG) method. The main idea is to decompose the projection operator into a combination of degree elevation operators. Due to the sparsity of the one-degree elevation matrices, the $L^2$ projection can be applied in $O(N^{d+1})$ operations, reducing the complexity of right-hand evaluation from $O(N^6)$ to $O(N^4)$ in three dimensions. 

The paper is organized as follows. In Section~\ref{sec:WADG}, we review the weight-adjusted DG discretization of the first order acoustic and elastic wave equations in heterogeneous media. Section~\ref{sec:BBDG} introduces a Bernstein-B\'ezier DG method and its fast implementation. In Section~\ref{sec:BBWADG}, we propose a Bernstein-B\'ezier weight-adjusted DG method,  based on an algorithm to efficiently apply the polynomial $L^2$ projection under Bernstein bases. Section~\ref{sec:result} presents numerical validation and verification.

\section{Mathematical notation} 
\label{sec:notation}

In this paper, we focus on wave problems in three dimensions since BBWADG can reduce the computational complexity by two orders. In contrast, only one order of complexity can be reduced in two dimensions.

We assume the physical domain $\Omega$ is well approximated by a triangulation $\Omega_h$ consisting of $K$ non-overlapping elements $D^k$. The reference tetrahedron is defined as follows
$$\widehat{D}=\{\left( r,s,t\right)\geq -1; r+s+t\leq -1\}.$$
We assume that each element $D^k$ is the image of the reference element $\widehat{D}$ under an affine mapping $\bm{\Phi}^k$
$$\bm{x}=\bm{\Phi}^k\widehat{\bm{x}},\qquad \bm{x}\in D^k,\ \ \widehat{\bm{x}}\in \widehat{D},$$
where $\bm{x}=\left(x,y,z\right)$ are physical coordinates on the $k$th element and $\widehat{\bm{x}}=\left(r,s,t\right)$ are coordinates on the reference element. 
Over each element $D^k$, we define the approximation space $V_h\left(D^k\right)$ as
$$V_h\left(D^k\right)=\bm{\Phi}^k\circ V_h\left(\widehat{D}\right),$$
where $V_h\left(\widehat{D}\right)$ is a  polynomial approximation space of degree $N$ on the reference element. For the reference tetrahedron, $V_h\left(\widehat{D}\right)$ is defined as follows
$$V_h\left(\widehat{D}\right)=P^N\left(\widehat{D}\right)=\big\{r^i s^j t^k,\ \ 0\leq i+j+k\leq N\big\}.$$

In three dimensions, Bernstein polynomials on a tetrahedron are  expressed using barycentric coordinates. The barycentric coordinates for the reference tetrahedron are given as
$$\lambda_0=-\frac{\left(1+r+s+t\right)}{2},\ \ \ \lambda_1=\frac{\left(1+r\right)}{2},\ \ \ \lambda_2=\frac{\left(1+s\right)}{2},\ \ \ \lambda_3=\frac{\left(1+t\right)}{2}.$$
The $N$th degree Bernstein basis is simply as a scaling of the barycentric monomials
$$B^N_{ijkl}=C^N_{ijkl}\lambda_0^i\lambda_1^j\lambda_2^k\lambda_3^l,\qquad C^N_{ijkl}=\frac{N!}{i!j!k!l!},\qquad  i+j+k+l=N,$$ 
which forms a nonnegative partition of unity. For simplicity, we introduce the multi-index $\bm{\alpha}=\left(\alpha_0,\dots,\alpha_d\right)$ to denote the tuple of barycentric indices $\left(i,j,k,l\right)$. We define the order of a multi-index as
\begin{equation*}
	|\bm{\alpha}|:=\sum_{i=0}^{d}\alpha_i.
\end{equation*}
We take $\bm{\alpha}\leq \bm{\beta}$ to mean that $\alpha_j\leq \beta_j,\ \forall j=0,\dots,d$. 

\section{Weight-adjusted Discontinuous Galerkin methods}\label{sec:WADG} 

\note{The following sections} introduce weight-adjusted DG discretizations of acoustic and elastic wave equations.

\subsection{Acoustic wave equation}

We consider a first order velocity-pressure formulation of the acoustic wave equation given as
\begin{equation}
	\begin{split}
		\frac{1}{c^2}\frac{\partial p}{\partial \tau}+\nabla\cdot\bm{u}=0,\\
		\frac{\partial\bm{u}}{\partial \tau}+\nabla p = 0,
	\end{split}
\label{eq:awave}
\end{equation}
where $p$ is the acoustic pressure, $\bm{u}\in \mathbb{R}^d$ is the vector of velocities in each coordinate direction and $c$ is the wavespeed. We assume that (\ref{eq:awave}) is posed over time $\tau\in[0,T)$ on the physical domain $\Omega$ with boundary $\partial\Omega$, and the wavespeed is bounded by
$$0<c_{\textmd{min}}\leq c(\bm{x})\leq c_{\textmd{max}}<\infty.$$

We define the jump across element interfaces as
$$[\![p]\!]=p^+-p, \qquad [\![\bm{u}]\!]=\bm{u}^+-\bm{u},$$
where $p^+, \bm{u}^+$ and $p, \bm{u}$ are the neighboring and local traces of the solution over each interface, respectively. The average across an element interface is denoted by
$$\{\!\{p\}\!\}=\frac{1}{2}\left(p^++p\right),\qquad \{\!\{\bm{u}\}\!\}=\frac{1}{2}\left(\bm{u}^++\bm{u}\right).$$

We discretize the acoustic wave equation (\ref{eq:awave}) in space using a strong formulation and choose penalty fluxes as
$$\bm{u}^*=\{\!\{\bm{u}\}\!\}-\frac{\tau_p}{2}[\![p]\!]\bm{n},\qquad p^*=\{\!\{p\}\!\}-\frac{\tau_u}{2}[\![\bm{u}]\!]\cdot\bm{n},$$
where $\bm{n}$ is the outward unit normal vector on $D^k$. The corresponding semi-discrete formulation is given as follows
\begin{equation}
	\begin{split}
		\int_{D^k}\frac{1}{c^2}\frac{\partial p^k_h}{\partial \tau}\phi d\bm{x}&=-\int_{D^k}\nabla\cdot \bm{u}^k_h\phi d\bm{x}+\int_{\partial D^k} \frac{1}{2}\left(\tau_p[\![p_h^k]\!]-\bm{n}\cdot[\![\bm{u}^k_h]\!]\right)\phi d\bm{x},\\
		\int_{D^k}\frac{\partial \left(\bm{u}^k_h\right)_i}{\partial \tau}\bm{\psi}_i d\bm{x}&=-\int_{D^k} \frac{\partial p^k_h}{\partial \bm{x}_i}\bm{\psi}_i d\bm{x}+\int_{\partial D^k} \frac{1}{2}\left(\tau_u[\![\bm{u}^k_h]\!]\cdot\bm{n}-[\![p^k_h]\!]\right)\bm{\psi}_i\bm{n}_id\bm{x},
	\end{split}
\label{eq:sdf}
\end{equation}
where $\phi,\bm{\psi}$ are test functions and $\tau_p,\tau_u\geq 0$ are penalty parameters.

We define the mass matrix $\bm{M}$ and the face mass matrix $\bm{M}_f$ on $\widehat{D}$ as
$$\left(\bm{M}\right)_{ij}=\int_{\widehat{D}}\phi_i(\widehat{\bm{x}})\phi_j(\widehat{\bm{x}}) d\widehat{\bm{x}},\qquad \left(\bm{M}_f\right)_{ij}=\int_{f_{\widehat{D}}}\phi_i(\widehat{\bm{x}})\phi_j(\widehat{\bm{x}})d\widehat{\bm{x}}.$$
where $f_{\widehat{D}}$ is a face of the reference element $\widehat{D}$ and $\{\phi_i\}_{i=1}^{N_p}$ is an $N$th degree polynomial basis on $\widehat{D}$. Through an affine mapping $\bm{\Phi}^k$, we can map the local operators on $D^k$ to the reference operators
$$\bm{M}^k=J^k\bm{M},\qquad \bm{M}^{k}_f=J^{k}_f\bm{M}_f,$$
where $J^k$ is the determinant of the volume Jacobian and $J^{k}_f$ is the determinant of the face Jacobian for $f$. Similarly, the weighted mass matrix $\bm{M}^k_{w}$ on $D^k$ are given by
$$\left(\bm{M}^k_{w}\right)_{ij}=J^k\int_{\hat{D}}w\left(\bm{\Phi}^k\hat{\bm{x}}\right)\phi_i\left(\hat{\bm{x}}\right)\phi_j\left(\hat{\bm{x}}\right)d\hat{\bm{x}}.$$
The stiffness matrix on $D^k$ with respect to $x$ is defined as
$$\left(\bm{S}^k_x\right)_{ij}=\int_{D^k}\varphi_i\frac{\partial \varphi_j}{\partial x}d\bm{x},$$
and $\bm{S}^k_y, \bm{S}^k_z$ are defined similarly with respect to $y$ and $z$. Through chain rule, we can express stiffness matrices on $D^k$ in terms of the reference stiffness matrices $\bm{S}_1, \bm{S}_2, \bm{S}_3$ with respect to reference coordinates $r,s$ and $t$, respectively. Then, the semi-discrete formulation (\ref{eq:sdf}) can be written as
\begin{equation}
\begin{split}
\bm{M}^k_{1/c^2}\frac{d\bm{p}}{d\tau}&=-J^k\sum_{i=1}^{d}\left(\bm{G}^k_{i1}\bm{S}_1+\bm{G}^k_{i2}\bm{S}_2+\bm{G}^k_{i3}\bm{S}_3\right)\bm{U}_i+\sum_{f=1}^{N_{\textmd{faces}}}J^{k}_f\bm{M}_fF_p,\\
J^k\bm{M}\frac{d\bm{U}_i}{d\tau}&=-J^k\left(\bm{G}^k_{i1}\bm{S}_1+\bm{G}^k_{i2}\bm{S}_2+\bm{G}^k_{i3}\bm{S}_3\right)\bm{p}+\sum_{f=1}^{N_{\textmd{faces}}}J^{k}_f\bm{n}_i\bm{M}_fF_u,
\end{split}
\label{eq:matform}
\end{equation}
where $\bm{p}$ and $\bm{U}_i$ are degrees of freedom for $p$ and $\bm{u}_i$, and $\bm{G}^k$ is the matrix of geometric factors $r_x,s_x,t_x$, etc. The flux terms $F_p,F_u$ are defined such that
\begin{equation*}
\begin{split}
\left(\bm{M}_fF_p(\bm{p},\bm{p}^+,\bm{U},\bm{U}^+)\right)_j &= \int_{f_{\widehat{D}}}\frac{1}{2}\left(\tau_p [\![p]\!]-\bm{n}\cdot[\![\bm{u}]\!]\right)\phi_jd\hat{\bm{x}},\\
\left(\bm{n}_i\bm{M}_fF_u(\bm{p},\bm{p}^+,\bm{U},\bm{U}^+)\right)_j &= \int_{f_{\widehat{D}}}\frac{1}{2}\left(\tau_u [\![\bm{u}]\!]-[\![p]\!]\right)\bm{\psi}_j\bm{n}_id\hat{\bm{x}}.
\end{split}
\end{equation*}
Inverting $\bm{M}^k_{1/c^2}$ and $\bm{M}$ in (\ref{eq:matform}) produces a system of ODEs that can be solved by time-explicit methods. 

\note{When the wavespeed $c^2$ is approximated by a constant over each element, $\bm{M}^k_{1/c^2} = \frac{J^k}{c^2}\bm{M}$, and $\LRp{\bm{M}^k_{1/c^2} }^{-1} =  \frac{c^2}{J^k}\bm{M}^{-1}$.  Thus, to apply $\LRp{\bm{M}^k_{1/c^2} }^{-1}$, we need only store values of $J^k, c^2$ over each element and a single reference mass matrix inverse $\bm{M}^{-1}$ over the entire mesh.}\footnote{\note{In practice, the reference inverse mass matrix is incorporated into the definition of differentiation and lifting matrices on the reference element.}}. However, inverses of weighted mass matrices are \note{distinct from element to element if the wavespeed possesses sub-element variations.  Typical implementations precompute and store these weighted mass matrix inverese, which } significantly increases the storage cost of \note{high order} DG schemes.  

To address this issue, a weight-adjusted discontinuous Galerkin (WADG) is proposed in \cite{chan2018weight,chan2017weight}, which is energy stable and high order accurate for sufficiently regular weight functions. WADG approximates the weighted mass matrix by a weight-adjusted approximation $\widetilde{\bm{M}}^k_w$ given as
$$\bm{M}^k_w\approx\widetilde{\bm{M}}_w^k=\bm{M}^k\left(\bm{M}^k_{1/w}\right)^{-1}\bm{M}^k.$$
Plugging above expression into (\ref{eq:matform}),  we obtain the semi-discrete WADG discretization of (\ref{eq:awave}) as follows
\begin{equation}
	\begin{split}
	&\frac{d\bm{p}}{d\tau}=-\left(\bm{M}^k\right)^{-1}\bm{M}^k_{c^2}\left(\sum_{i=1}^{d}\sum_{j=1}^d\bm{G}_{ij}^k\bm{D}_j\bm{U}_i+\sum_{f=1}^{N_{\textmd{faces}}}\frac{J^k_f}{J^k}\bm{L}^fF_p\right),\\
	&\frac{d\bm{U}_i}{d\tau}=-\left(\bm{G}_{i1}^k\bm{D}_1+\bm{G}^k_{i2}\bm{D}_2+\bm{G}^k_{i3}\bm{D}_3\right)\bm{p}+\sum_{f=1}^{N_{\textmd{faces}}}\frac{J^k_f}{J^k}\bm{n}_i\bm{L}^fF_u,
	\end{split}
	\label{eq:WADGform}
	\end{equation}
\noindent where $\bm{D}_i=\bm{M}^{-1}\bm{S}_i$ are derivative operators with respect to reference coordinates $r,s,t$, $\bm{L}^f=\bm{M}^{-1}\bm{M}_f$ are lift operators over faces. 

\subsection{Elastic wave equation}

\note{The weight-adjusted approach can be extended to matrix-valued weights, which appear in symmetrized first order velocity-stress formulations of the elastic wave equation \cite{hughes1978classical}}.  Let $\rho$ be the density and $\bm{C}$ be the symmetric matrix form of constitutive tensor relating stress and strain. The first-order elastic wave equations are given by
\begin{equation}
\begin{split}
\rho\frac{\partial\bm{v}}{\partial \tau}&=\sum_{i=1}^{d}\bm{A}_i^T\frac{\partial\bm{\sigma}}{\partial\bm{x}_i},\\
\bm{C}^{-1}\frac{\partial\bm{\sigma}}{\partial \tau}&=\sum_{i=1}^{d} \bm{A}_i\frac{\partial\bm{v}}{\partial\bm{x}_i},
\end{split}
\label{eq:ewave}
\end{equation}
where $\bm{v}$ is the  velocity and $\bm{\sigma}$ is a vector consisting of unique entries of the symmetric stress tensor. The matrices $\bm{A}_i$ are given as
 $$\bm{A}_1=
 \renewcommand\arraystretch{1}
 \begin{pmatrix}
 1&0&0\\
 0&0&0\\
 0&0&0\\
 0&0&0\\
 0&0&1\\
 0&1&0
 \end{pmatrix},\qquad
 \bm{A}_2=
 \begin{pmatrix}
 0&0&0\\
 0&1&0\\
 0&0&0\\
 0&0&1\\
 0&0&0\\
 1&0&0
 \end{pmatrix},\qquad
 \bm{A}_3=
 \begin{pmatrix}
 0&0&0\\
 0&0&0\\
 0&0&1\\
 0&1&0\\
 1&0&0\\
 0&0&0
 \end{pmatrix}.
 $$
 For isotropic media, $\bm{C}$ is given by
 $$
 \bm{C}=
 \renewcommand\arraystretch{1}
 \setlength\arraycolsep{3pt}
 \begin{pmatrix}
 2\mu+\lambda &\lambda&\lambda&\\
 \lambda&2\mu+\lambda&\lambda&\\
 \lambda&\lambda&2\mu+\lambda&\\
 &&&&\mu\bm{I}^{3\times3}
 \end{pmatrix},$$
 where $\mu,\lambda$ are Lam\'e parameters. We note that $\bm{A}_i$ are \note{spatially constant independently of media heterogeneities.} 

\note{We can construct a semi-discrete DG scheme for elasticity analogous to the formulation (\ref{eq:sdf}) for the acoustic wave equation}
{\small
\begin{equation*}
\begin{split}
&\left(\rho\frac{\partial \bm{v}}{\partial \tau},\bm{w}\right)_{L^2(D^k)}=\left(\sum_{i=1}^{d}\bm{A}_i^T\frac{\partial\bm{\sigma}}{\partial\bm{x}_i},\bm{w}\right)_{L^2(D^k)}+\Bigg\langle\frac{1}{2}\bm{A}_n^T[\![\bm{\sigma}]\!]+\frac{\tau_v}{2}\bm{A}_n^T\bm{A}_n[\![\bm{v}]\!],\bm{w}\Bigg\rangle_{L^2(\partial D^k)},\\
&\left(\bm{C}^{-1}\frac{\partial \bm{\sigma}}{\partial \tau},\bm{q}\right)_{L^2(D^k)}=\left(\sum_{i=1}^{d}\bm{A}_i\frac{\partial\bm{v}}{\partial\bm{x}_i},\bm{q}\right)_{L^2(D^k)}+\Bigg\langle\frac{1}{2}\bm{A}_n[\![\bm{v}]\!]+\frac{\tau_{\sigma}}{2}\bm{A}_n\bm{A}_n^T[\![\bm{\sigma}]\!],\bm{q}\Bigg\rangle_{L^2(\partial D^k)},
\end{split}
\end{equation*}
}
where $(\cdot,\cdot)_{L(D^k)}$ and $\langle\cdot,\cdot\rangle_{L(D^k)}$ denote the $L^2$ inner product on $D^k$ and $\partial D^k$, respectively.  

\note{The presence of $\bm{C}^{-1}$ on the left-hand side produces a matrix-valued mass matrix $\bm{M}_{\bm{C}^{-1}}$ involving the constitutive stress tensor $\bm{C}$
\[
\bm{M}_{\bm{C}^{-1}} = \begin{bmatrix}
\bm{M}_{\bm{C}^{-1}_{11}} & \ldots &\bm{M}_{\bm{C}^{-1}_{1d}}\\
\vdots & \ddots & \vdots\\
\bm{M}_{\bm{C}^{-1}_{d1}} & \ldots &\bm{M}_{\bm{C}^{-1}_{dd}}\\
\end{bmatrix}, 
\]
where $\bm{C}^{-1}_{ij}$ denotes the $ij$th entry of $\bm{C}^{-1}$ and $\bm{M}_{\bm{C}^{-1}_{ij}}$ denotes the scalar weighted mass matrix with weight $\bm{C}^{-1}_{ij}$.   The matrix $\bm{M}_{\bm{C}^{-1}}$ can be understood as the matrix-weighted analogue of the scalar wavespeed-weighted mass matrix $\bm{M}_{1/c^2}$ which appeared for the acoustic wave equation.}

\note{
The inverse of $\bm{M}_{\bm{C}^{-1}}$ can be approximated by the inverse of a matrix-weighted weight-adjusted mass matrix 
\[
\bm{M}^{-1}_{\bm{C}^{-1}}\approx \LRp{\bm{I}\otimes \bm{M}^{-1}} \bm{M}_{\bm{C}} \LRp{\bm{I}\otimes \bm{M}^{-1}},
\]
where $\otimes$ denotes the Kronecker product.  We note that this approximation can be applied in terms of scalar weight-adjusted mass matrix inverses.  Incorporating this approximation yields the following}
WADG scheme for the elastic wave equations (\ref{eq:ewave})
 \begin{equation}
\begin{split}
\frac{\partial\bm{V}}{\partial \tau}& \!= \!\left( \!\bm{I} \!\otimes \!\left(\bm{M}^k\right)^{-1} \!\right) \!\bm{M}^k_{\rho^{-1} \!\bm{I}} \!\left(\sum_{i=1}^{d}\sum_{j=1}^{d}\bm{G}^k_{ij}\left(\bm{A}_i^T \!\otimes \! \bm{D}_j\right) \!\bm{\Sigma} \!+ \!\frac{J^k_f}{J^k}\sum_{f=1}^{N_{\textmd{faces}}} \!\left(\bm{I} \!\otimes \!\bm{L}^f\right) \!\bm{F}_v \!\right),\\
\frac{\partial\bm{\Sigma}}{\partial \tau}& \!= \!\left( \!\bm{I} \!\otimes \!\left(\bm{M}^k\right)^{-1} \!\right) \!\bm{M}^k_{\bm{C}} \!\left(\sum_{i=1}^{d}\sum_{j=1}^{d}\bm{G}^k_{ij}\left(\bm{A}_i \!\otimes \! \bm{D}_j\right) \!\bm{V} \!+ \!\frac{J^k_f}{J^k}\sum_{f=1}^{N_{\textmd{faces}}} \!\left(\bm{I} \!\otimes \!\bm{L}^f\right) \!\bm{F}_\sigma \!\right),
\end{split}
\label{eq:ewadg}
\end{equation}
where $\bm{V},\bm{\Sigma}$ are constructed by concatenating $\bm{\Sigma}_i,\bm{V}_i$ into single vectors, respectively, and  $\bm{F}_v, \bm{F}_{\sigma}$ are vectors representing the velocity and stress numerical fluxes.  \note{We note that this formulation is energy stable and high order accurate for elastic wave propagation in either isotropic or aniostropic heterogeneous media \cite{chan2018weight}.}

\subsection{Quadrature-based implementation}
\label{sec:qwadg}

In practice, weight-adjusted mass matrrix inverses are applied in a matrix-free fashion using sufficiently accurate quadrature rules.  \note{We follow \cite{chan2017weight}} and use simplicial quadratures which are exact for polynomials of degree $2N+1$ \cite{xiao2010numerical}.  Let $\widehat{\bm{x}}_i,\widehat{\bm{w}}_i$ denote the quadrature points and weights on the reference element. We define the interpolation matrix $\bm{V}_q$ as
$$\left(\bm{V}_q\right)_{ij}=\phi_j\left(\hat{\bm{x}}_i\right),$$
whose columns consist of values of basis functions at quadrature points. On each element $D^k$, we have 
\begin{equation*}
\bm{M}^k=J^k\bm{M}=J^k\bm{V}_q^T\textmd{diag}\left(\hat{\bm{w}}\right)\bm{V}_q,\ \ \ 
\bm{M}^k_{c^2}=J^k\bm{V}_q^T\textmd{diag}\left(\bm{d}\right)\bm{V}_q,\ \ \  \bm{d}_i=\frac{\hat{\bm{w}}_i}{c^2\left(\bm{\Phi}^k\hat{\bm{x}}_i\right)}
\end{equation*}
where $\bm{\Phi}^k\hat{\bm{x}}_i$ are quadrature points on $D^k$ and \note{ $c^2\left(\bm{\Phi}^k\hat{\bm{x}}\right)$ denote the values of the wavespeed at quadrature points}.  \note{Evaluating the right hand side of (\ref{eq:WADGform}) and (\ref{eq:ewadg}) requires applying the product of an unweighted mass matrix and weighted mass matrix, such as $\left(\bm{M}^k\right)^{-1}\bm{M}^k_{c^2}$.}  This can be done using quadrature-based matrices as follows
\begin{equation}
\left(\bm{M}^k\right)^{-1}\bm{M}^k_{c^2} = \bm{P}_q\textmd{diag}\left(\frac{1}{c^2\left(\bm{\Phi}^k\hat{\bm{x}}\right)}\right)\bm{V}_q,
\label{eq:pwadg}
\end{equation}
where $\bm{P}_q=\bm{M}^{-1}\bm{V}_q^T\textmd{diag}\left(\hat{\bm{w}}\right)$ is a quadrature-based polynomial $L^2$ projection operator on the reference element.  Moreover, since $\bm{P}_q, \bm{V}_q$ are reference operators, the implementation of (\ref{eq:pwadg}) requires only $O\left(N^d\right)$ storage for values of the wavespeed \note{$c^2\left(\bm{\Phi}^k\hat{\bm{x}}\right)$} at quadrature points for each element. In contrast, \note{storing full weighted mass matrix inverses or factorizations} requires $O\left(N^{2d}\right)$ storage on each element.  For example, in three dimensions, the number of quadrature points on one element, scales with $O(N_p)=O(N^3)$, while size of the weighted mass matrix inverse is $O(N_p)\times O(N_p)$, implying an $O(N^6)$ storage requirement.

\section{Bernstein-B\'ezier DG methods}\label{sec:BBDG} 

In this section, we review how to use Bernstein-B\'ezier polynomial bases to construct efficient high order DG methods.  \note{For nodal DG methods, the numerical fluxes can be computed in terms of the difference} between \note{degrees of freedom} at face nodes on two neighboring elements.  \note{This is also true of} the Bernstein basis, since Bernstein polynomials share a geometrical decomposition with vertex, edge, face and interior nodes in the sense that edge basis functions vanish at vertices, face basis functions vanish at vertices and edges, and interior basis functions vanish at vertices, edges, and faces \cite{farouki2012bernstein}. Hence, the value of a Bernstein polynomial on one face is determined by basis functions associated with that face only, \note{and} the jumps of polynomial solutions under Bernstein bases across element interfaces can be computed similarly using node-to-node connectivity maps and degrees of freedom corresponding to face points on two neighboring elements.  


Evaluating the DG formulation (\ref{eq:WADGform}) requires applying derivative and lift operators.  These steps can be accelerated using properties of Bernstein polynomials.  Let $\bm{D}^i$ be the Bernstein derivative operator with respect to $i$th barycentric coordinate.  \note{Differentiation matrices with respect to reference coordinates can be expressed as a linear combination of barycentric differentiation matrices $\bm{D}^i$.}  It can be shown that each row of \note{$\bm{D}^i$} has at most $d+1$ non-zeros in $d$ dimensions \cite{ainsworth2011bernstein, kirby2011fast}, such that \note{the sparse application of} barycentric Bernstein differentiation matrices requires only $O(N^d)$ operations.  In contrast, nodal derivative operators are \note{generally} dense matrices \note{of} size $N_p\times N_p$, which require $O(N^{2d})$ operations to apply. 

For a Bernstein lift operator $\bm{L}^f$, it was observed in \cite{chan2017gpu} that \note{$\bm{L}^f$} can be factorized as 
$$\bm{L}^f=\bm{E}^f_L\bm{L}_0,$$
where $\bm{E}^f_L$ is the face reduction matrix and $\bm{L}_0$ is a sparse \note{$N_p^f\times N_p^f$} matrix, where $N_p^f$ is the number of degrees of freedom in the $N$th degree polynomial space on a single face.  Moreover, $\bm{L}_0$ has no more than seven nonzeros per row (independent of $N$).\footnote{\note{Explicit expressions for $\bm{E}^f_L$ and  $\bm{L}_0$ can be found in \cite{chan2017gpu}.  }}  \note{The fixed bandwidth of }the matrix $\bm{L}_0$ \note{implies that it } can be applied \note{in $O(N^{d-1})$ operations}.  The face reduction operator $\bm{E}^f_L$ can be further expanded as product of one-degree reduction operators. Application of $\bm{E}^f_L$ requires applying $N$  triangular one-degree reduction operators, each of which costs $O(N^{d-1})$ to apply. Hence, the total cost of the implementation of the lift matrix $\bm{L}^f$ is $O(N^d)$ in $d$ dimensions. In contrast, the lift matrices under a nodal basis have size $N_p\times N_p^f$ and cost $O(N^{2d-1})$ to apply.

\note{To summarize, the overall cost of evaluating the DG right-hand side is $O(N^d)$ per element in $d$ dimensions under a Bernstein basis.  Since the number of degreees of freedom grows as $O(N^d)$, this complexity is optimal. }  


\section{A fast implementation of weight-adjusted DG methods}\label{sec:BBWADG}

\note{While the evaluation of the BBDG right-hand side requires only  $O(N^d)$ operations per element, this is true only if media is homogeneous (piecewise constant) over each element.  Sub-element heterogeneities can be incorporated using WADG and numerical quadrature as discusssed in Section~\ref{sec:qwadg}. }  However, \note{because quadrature-based WADG involves dense matrix-vector products, the cost generally scales as $O(N^{2d})$ in $d$ dimensions.  Thus, naively utilizing WADG to address sub-cell heterogeneities results in a computational complexity of $O(N^{2d})$ per element, which will dominate the $O(N^d)$ complexity of BBDG and negate any gains in computational efficiency.  }

To address this, we propose a Bernstein-B\'ezier weight-adjusted discontinuous Galerkin (BBWADG) method \note{based on a polynomial approximation of media heterogeneities.  We first note that the evaluation of the DG right-hand side yields a polynomial of degree $N$.  Let $u(x)$ denote this $N$ polynomial, and let $\bm{u}$ denote its coefficients in some basis.  WADG involves applying (\ref{eq:pwadg}) to $\bm{u}$ to compute
\[
\bm{P}_q\textmd{diag}\left(\frac{1}{c^2\left(\bm{\Phi}^k\hat{\bm{x}}\right)}\right)\bm{V}_q\bm{u}.
\]
Since $\bm{P}_q$ is a quadrature-based discretization of the $L^2$ projection operator, this is simply a quadrature-based $L^2$ projection of $u/c^2$ onto polynomials of degree $N$. }

\note{Suppose now that $1/c^2$ is a polynomial of degree $M$.  Then, the main steps of WADG are equivalent to computing $u/c^2$, which is a polynomial of degree $M+N$, and projecting this polynomial onto degree $N$ polynomials.  These two steps correspond to polynomial multiplication and polynomial $L^2$ projection, both of which can be performed efficiently under Bernstein bases.  The resulting algorithms require $O(N^{d+1})$ operations per element in $d$ dimensions.  }  

\note{In practice, we construct a polynomial approximation of $1/c^2$ using a quadrature-based $L^2$ projection of the true wavespeed.  Since the wavespeed does not generally change during a simulation, this approximation can be computed and stored once in a pre-processing step so that it does not affect the computational cost of the solver.}

\note{The remainder of this section describes efficient algorithms for computing the polynomial multiplication and polynomial $L^2$ projection of two Bernstein polynomials.} 
This section is separated into four parts: in Section~\ref{sec:bbmult}, we explain how to compute the product of two Bernstein polynomials as a higher degree Bernstein polynomial.  \note{We introduce Bernstein degree elevation matrices in Section~\ref{sec:bbelev}, which are then used in Section~\ref{sec:bbproj} \note{to construct a representation of the polynomial $L^2$ projection matrix which can be evaluated in $O(N^{d+1})$ operations}.  Finally, we present a GPU-accelerated algorithm of the Bernstein polynomial $L^2$ projection in Section~\ref{sec:bbalgo}.}

\subsection{Bernstein polynomial multiplication}\label{sec:bbmult}

\note{Efficient algorithms exist for the multiplication of two Bernstein polynomials based on discrete convolutions \cite{sanchez2003algebraic}.  We describe a sparse matrix-based implementation here, which is simpler to implement on GPUs.}


Let $B^{N}_{\bm{\alpha}}$ and $B^{M}_{\bm{\beta}}$ be any two Bernstein basis functions of degree $N$ and $M$ respectively. Their product is
\begin{equation*}
\begin{split}
B^{N}_{\bm{\alpha}}B^{M}_{\bm{\beta}}
=\frac{\binom{\bm{\alpha}+\bm{\beta}}{\bm{\alpha}}}{\binom{N+M}{N}}B^{N+M}_{\bm{\alpha}+\bm{\beta}},
\end{split}
\end{equation*}
which is a Bernstein basis function of degree $N+M$ up to a scaling. This observation can be used to efficiently compute the product of two Bernstein polynomials. Let $f({\bm{x}})$ and $g({\bm{x}})$ be two Bernstein polynomials of degree $N$ and $M$ respectively with representations
\begin{equation}
f({\bm{x}})=\sum_{|\bm{\alpha}|=N}a_{\bm{\alpha}}B^N_{\bm{\alpha}}(\bm{x}),\qquad g({\bm{x}})=\sum_{|\bm{\beta}|=M}b_{\bm{\beta}} B^M_{\bm{\beta}}(\bm{x}).
\label{eq:fg}
\end{equation}
Then, $h(\bm{x})=f(\bm{x})g(\bm{x})$ is a Bernstein polynomial of degree $N+M$.  

We first \note{illustrate polynomial multiplication for the} $M=1$ \note{case}, such that $g(\bm{x})$ is a linear polynomial. Let $\bm{e}_j$ denote the canonical vector such that  $g(\bm{x})=\sum_{j=0}^{d}b_jB^1_{\bm{e}_j}(\bm{x})$. Then, the product of $f,g$ is
\begin{equation}
\begin{split}
h(\bm{x})&=\sum_{j=0}^{d}\sum_{|\bm{\alpha}|=N}a_{\bm{\alpha}}b_jB^N_{\bm{\alpha}}(\bm{x})B^1_{\bm{e}_j}(\bm{x})\\
&=\sum_{j=0}^{d}\sum_{|\bm{\alpha}|=N}a_{\bm{\alpha}}b_j\frac{\alpha_{j}+1}{N+1}B^{N+1}_{\bm{\alpha}+\bm{e}_j}(\bm{x}).
\end{split}
\label{eq:hfg}
\end{equation}
Let $\bm{\gamma}$ be a multi-index and $c_{\bm{\gamma}}$ denote the coefficient of $B^{N+1}_{\bm{\gamma}}$ in the expression for $h$ in (\ref{eq:hfg}). Then $c_{\bm{\gamma}}$ can be computed as
\begin{equation}
c_{\bm{\gamma}}=\sum_{j=0}^{d}a_{\bm{\gamma}-\bm{e}_j}b_j\frac{\gamma_{j}}{N+1},
\label{eq:coeff}
\end{equation}
where we set the coefficient to be zero if the corresponding multi-index $\bm{\gamma}-\bm{e}_j$ has negative components.  Hence, for the case $M=1$, the Bernstein coefficients of $h(\bm{x})$ can be expressed as a linear combination of at most $d+1$ products of coefficients for $f(\bm{x})$ and coefficients for $g(\bm{x})$.  This, in turn, can be efficiently computed using sparse matrix operations, as illustrated in Fig.~\ref{fig:polymult}.  
\begin{figure}[!h]
	\centering
	\includegraphics[width=0.8\linewidth]{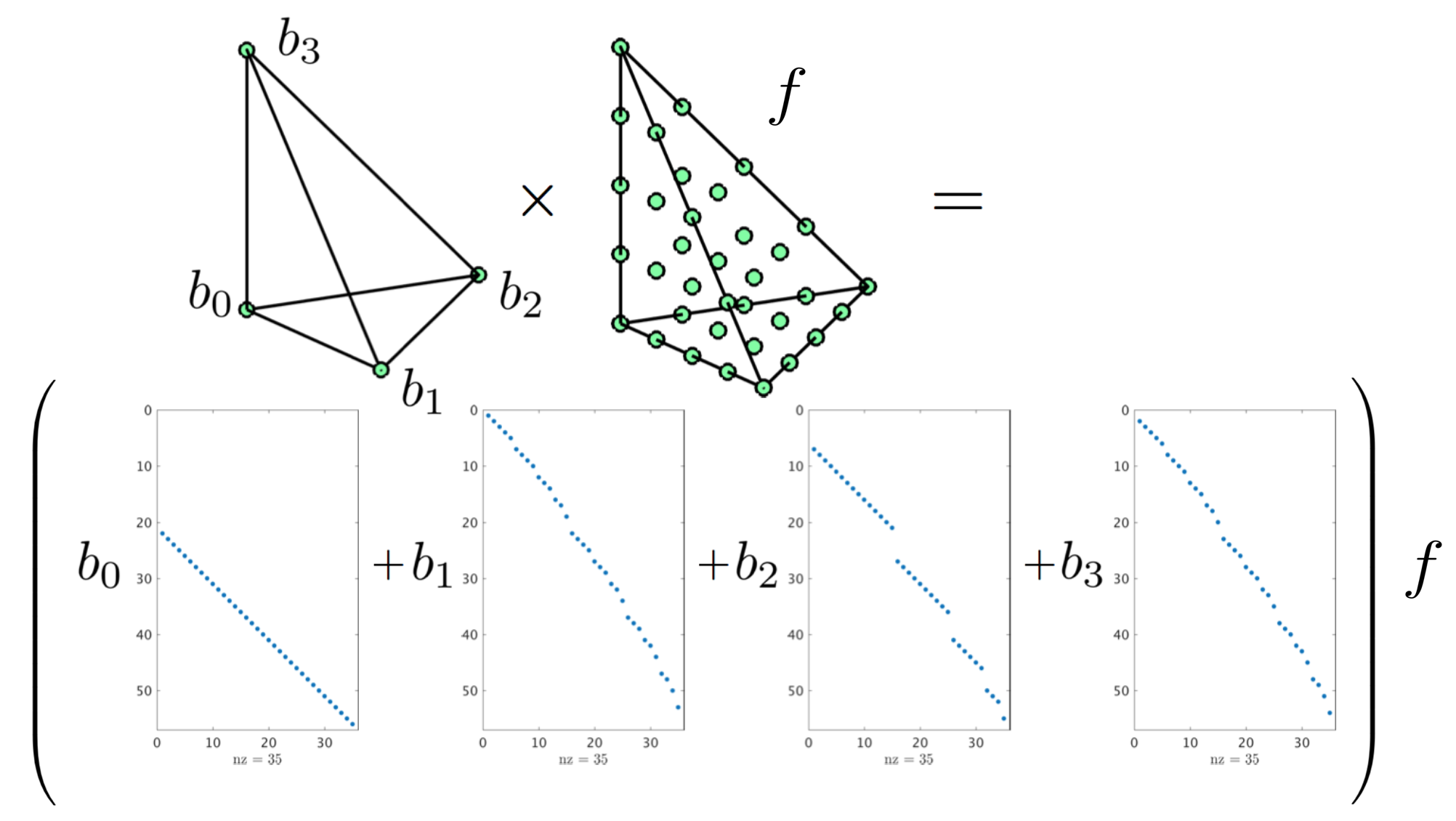}
	\caption[Visualization of Bernstein polynomial multiplication for $M=1$]{Visualization of Bernstein polynomial multiplication for $M=1$}
	\label{fig:polymult}
\end{figure}

We now consider the \note{more general case of arbitrrary $M$}.  We are interested in computing the product $h(\bm{x})=f(\bm{x})g(\bm{x})$, where $f(\bm{x})\in P^N$ and $g(\bm{x})\in P^M$. We have the following
\begin{equation*}
\begin{split}
h(\bm{x}) 
&=\sum_{|\bm{\beta}|=M}\sum_{|\bm{\alpha}|=N}a_{\bm{\alpha}}b_{\bm{\beta}}B^N_{\bm{\alpha}}(\bm{x})B^M_{\bm{\beta}}(\bm{x})\\
&=\sum_{|\bm{\beta}|=M}\sum_{|\bm{\alpha}|=N}a_{\bm{\alpha}}b_{\bm{\beta}}\frac{\binom{\bm{\alpha}+\bm{\beta}}{\bm{\alpha}}}{\binom{N+M}{N}}B^{N+M}_{\bm{\alpha}+\bm{\beta}}(\bm{x}).
\end{split}
\end{equation*}
Hence, the coefficient $c_{\bm{\gamma}}$ of $B^{N+M}_{\bm{\gamma}}$ in $h(\bm{x})$ can be computed as
\begin{equation}
c_{\bm{\gamma}}=\sum_{|\bm{\beta}|=M}a_{\bm{\gamma}-\bm{\beta}}b_{\bm{\beta}}\frac{\binom{\bm{\gamma}}{\bm{\beta}}}{\binom{N+M}{N}}.
\label{eq:mcoeff}
\end{equation}
As in (\ref{eq:coeff}), the coefficient $c_{\bm{\gamma}}$ is set to be zero if the corresponding multi-index $\bm{\gamma}-\bm{\beta}$ has negative components. Hence, $c_{\bm{\gamma}}$ can be written as a combination of at most $M_p$ products of coefficients from $f$ and $h$, where $M_p$ is the dimension of the $M$th degree polynomial space.  \note{As in the $M=1$ case,} the multiplication of two arbitrary Bernstein polynomials can be implemented efficiently using sparse matrix multiplications. 

We can also determine the computational complexity of Bernstein polynomial multiplication from the expression (\ref{eq:mcoeff}) for the product of two Bernstein polynomials.  We summarize this in the following theorem:
\begin{thm}\label{thm:bbmult} 
	The multiplication of two Bernstein polynomials of degree $N$ and $M$ can be performed in $O\left( \left( M N \right)^d\right)$ operations.  For fixed $M$, polynomial multiplication requires $O(N^d)$ operations. 
\end{thm}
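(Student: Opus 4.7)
The plan is to read off the complexity directly from the explicit coefficient formula (\ref{eq:mcoeff}). Each output coefficient $c_{\bm{\gamma}}$ of $h = fg$ is expressed as a single sum indexed by multi-indices $\bm{\beta}$ with $|\bm{\beta}| = M$. Provided the rational weights $\binom{\bm{\gamma}}{\bm{\beta}}/\binom{N+M}{N}$ are precomputed once in a setup phase (so they do not inflate the per-element runtime), each term of the sum contributes $O(1)$ floating-point operations, and the cost of assembling one $c_{\bm{\gamma}}$ is proportional to the number of admissible $\bm{\beta}$.

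Next I would invoke the standard simplicial dimension count: the number of $(d+1)$-tuples of nonnegative integers summing to $M$ is $\binom{M+d}{d} = O(M^d)$ for fixed spatial dimension $d$, and the number of output coefficients $c_{\bm{\gamma}}$ is the dimension of $P^{N+M}$, namely $\binom{N+M+d}{d} = O((N+M)^d)$. Multiplying the per-coefficient work by the total number of coefficients yields a raw bound of $O\!\left((N+M)^d M^d\right)$ operations for the full product.

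Finally I would convert this into the stated bounds. In the regime of interest for BBWADG, where the wavespeed approximation degree satisfies $M \le N$, we have $(N+M)^d \le (2N)^d = O(N^d)$, so the cost collapses to $O(M^d N^d) = O((MN)^d)$, giving the first claim. For the second, holding $M$ fixed absorbs $M^d$ into the implicit constant and also gives $(N+M)^d = O(N^d)$, leaving $O(N^d)$ — the same scaling as applying the Bernstein differentiation and lift operators from Section~\ref{sec:BBDG}, which is what makes the combination of polynomial multiplication with BBDG appealing.

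I do not anticipate a genuine obstacle here, since the argument is pure accounting in standard simplicial combinatorics. The only subtleties worth flagging are that the constants implicit in the $O(\cdot)$ notation depend on the fixed spatial dimension $d$, and that the sparse multi-index-to-multi-index map carrying $(a_{\bm{\alpha}}, b_{\bm{\beta}}) \mapsto c_{\bm{\gamma}}$ must be stored once and reused across all elements, rather than rebuilt per element, in order for the per-element cost bound to be realized in practice.
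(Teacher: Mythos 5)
Your proposal is correct and follows essentially the same route as the paper, which (without writing out a formal proof) reads the complexity directly off (\ref{eq:mcoeff}): each of the $O\left((N+M)^d\right)$ output coefficients $c_{\bm{\gamma}}$ is a sum of at most $M_p = O(M^d)$ terms with precomputable weights. Your additional remark that one needs $M\leq N$ (or, by symmetry of the product, summing over the lower-degree factor) to convert the raw bound $O\left((N+M)^d M^d\right)$ into the stated $O\left((MN)^d\right)$ is a useful clarification that the paper leaves implicit.
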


\subsection{Bernstein degree elevation operators} 
\label{sec:bbelev}

\note{In this section, we introduce degree elevation matrices, which are used within algorithms for polynomial $L^2$ projection in Section~\ref{sec:bbproj}.}. 
Degree elevation refers to the representation of a lower degree polynomial in a high degree polynomial basis. It can be shown that the $d$-dimensional Bernstein polynomial of degree $N-1$ can be expressed as a linear combination of no more than $d+1$ Bernstein polynomials of degree $N$ \cite{kirby2017fast}. For example, a basis function $B^{N-1}_{\bm{\alpha}}$ can be written as
\begin{equation}
\begin{split}
B^{N-1}_{\bm{\alpha}}
=\sum_{j=0}^{d}\frac{\alpha_j+1}{N}B^{N}_{\bm{\alpha}+\bm{e}_j}, 
\end{split}
\label{eq:bbele}
\end{equation}
where $\bm{e}_j$ is the $j$th canonical vector \cite{kirby2011fast}. This property can be used to construct degree elevation matrices under the Bernstein basis. Let $\bm{E}^N_{N-i}$ denote the degree elevation operator, which evaluates a polynomial of degree $N-i$ as a degree $N$ polynomials on a triangle. From (\ref{eq:bbele}), we know that the one-degree elevation matrix $\bm{E}^N_{N-1}$ is sparse, and only contains at most $d+1$ non-zero entries per row independently of the degree $N$. 

Let $\bm{\alpha}$ denote the multi-index for the row corresponding to the basis function $B^{N-1}_{\bm{\alpha}}$. Then, the non-zero values and column indices $\bm{\beta}$ of $\bm{E}^N_{N-1}$ are
$$\left(\bm{E}^N_{N-1}\right)_{\bm{\alpha},\bm{\beta}}=\frac{\alpha_j+1}{N},\ \ \ \ \bm{\beta}=\bm{\alpha}+\bm{e}_j,\ \ \ j=1,\dots,d.$$
The degree elevation matrix $\bm{E}^N_{N-i}$ between arbitrary degrees can be expressed as the product of one-degree elevation matrices
\begin{equation}
\bm{E}^N_{N-i}=\bm{E}^{N}_{N-1}\bm{E}^{N-1}_{N-2}\cdots\bm{E}^{N-i+1}_{N-i}.
\label{eq:elepro}
\end{equation}
We also refer to the transpose of the degree elevation operator $\left(\bm{E}^N_{N-1}\right)^T$ as the degree reduction operator.


\subsection{Bernstein polynomial $L^2$ projection} \label{sec:bbproj}


\note{Recall that the two steps of BBWADG are polynomial multiplication and polynomial $L^2$ projection.  The first step was discussed in Section~\ref{sec:bbmult}, and we discuss the second step in this section.  We introduce an efficient method of computing the $L^2$ projection of a polynomial to a lower degree polynomial under a Bernstein basis.  This approach is based on a representation of the polynomial projection matrix in terms of sparse one-degree elevation matrices. }

The \note{representation of the polynomial $L^2$ projection matrix using degree elevation matrices} is based on two observations. The first observation is that the polynomial $L^2$ projection operator is rectangular diagonal under a modal (orthogonal) basis.  These modal basis functions \cite{owens1956polynomial,dubiner1991spectral,koornwinder1975two,Proriol1957} are hierarchical and $L^2$ orthogonal, such that 
\begin{singlespace}
	$$\Big(L_{\bm{\gamma}},L_{\bm{\sigma}}\Big)=\begin{cases}
	\text{$1$,}    &{\bm{\gamma}=\bm{\sigma}},\\[2ex]
	\text{$0$,} &{\textmd{otherwise,}}
	\end{cases},\qquad L_{\bm{\gamma}}\in P^{|\bm{\gamma}|},$$
\end{singlespace}
\noindent where $\bm{\gamma}$ and $\bm{\sigma}$ are $d$-dimensional multi-indices. For simplicity, we assume the hierarchical modal basis functions are arranged in ascending order \note{with respect to} $|\bm{\gamma}|$.

The second observation is that the outer product of the degree elevation matrix and its transpose is diagonal under a modal basis. We wish to represent the polynomial $L^2$ projection  matrix as a linear combination of these outer products. We recall some results from \cite{chan2017gpu}, which will be used in this proof. 

\begin{lem}[Lemma A.2 in \cite{chan2017gpu}]\label{lemma:A2}Suppose $p\in P^N(\hat{D})$. Let $\bm{T}$ be the transformation matrix mapping model coefficients to Bernstein coefficients such that 
	$$p=\sum_{|\bm{\gamma}|\leq N}c_{\bm{\gamma}}^LL_{\bm{\gamma}}=\sum_{|\bm{\alpha}|=N}c_{\bm{\alpha}}^BB_{\bm{\alpha}}^N,\ \ \ \ \bm{c}^B=\bm{T}\bm{c}^L,$$
	where $L_{\bm{\gamma}},B_{\bm{\alpha}}^N$ are modal and Bernstein polynomials, respectively. Define $\tilde{\bm{D}}$ as $$\tilde{\bm{D}}=\bm{T}^{-1}_{N-i}\left(\bm{E}^N_{N-i}\right)^T\bm{T}_N$$
	Suppose $0\leq k\leq N$, and let $\lambda_k^N, \lambda_k^{N-i}$ be the distinct eigenvalues of $\bm{M}_N$ and $\bm{M}_{N-i}$, respectively. The entries of $\tilde{\bm{D}}$ are 
	\begin{singlespace}
		$$\tilde{\bm{D}}_{\bm{\nu,\gamma}}=\begin{cases}
		\text{$\lambda_{|\bm{\gamma}|}^{N-i}/\lambda_{|\bm{\gamma}|}^N$,}&{\bm{\nu}=\bm{\gamma}},\\[2ex]
		\text{$0$,} &{\textmd{otherwise,}}
		\end{cases} \ \ \ \ \ \ \tilde{\bm{D}}\in \mathbb{R}^{(N-i)_p,N_p}$$
	\end{singlespace}
	\noindent where $N_p,(N-i)_p$ are the dimensions of the space of polynomials of total degree $N$ and $N-i$, respectively. 
\end{lem}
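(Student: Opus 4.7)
The plan is to exploit the compatibility of the hierarchical modal basis with both the inclusion $P^{N-i}\hookrightarrow P^N$ and the spectral structure of the Bernstein mass matrix $\bm{M}$.  First, I would derive a factorization of $\bm{E}^N_{N-i}$ through the modal basis.  Because the basis $\{L_{\bm{\gamma}}\}_{|\bm{\gamma}|\leq N-i}$ is an initial segment of $\{L_{\bm{\gamma}}\}_{|\bm{\gamma}|\leq N}$ under the ascending ordering of $|\bm{\gamma}|$, the inclusion $P^{N-i}\hookrightarrow P^N$ expressed in modal coordinates is simply the rectangular zero-padding matrix $\bm{I}_{N,N-i}$.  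Since the same inclusion expressed in Bernstein coordinates is $\bm{E}^N_{N-i}$, commuting the change-of-basis diagram gives
\begin{equation*}
\bm{E}^N_{N-i} = \bm{T}_N\,\bm{I}_{N,N-i}\,\bm{T}_{N-i}^{-1}.
\end{equation*}
Taking the transpose and substituting into the definition of $\tilde{\bm{D}}$ yields
\begin{equation*}
\tilde{\bm{D}} = \bm{T}_{N-i}^{-1}\bm{T}_{N-i}^{-T}\,\bm{I}_{N-i,N}\,\bm{T}_N^T\bm{T}_N,
\end{equation*}
where $\bm{I}_{N-i,N}=\bm{I}_{N,N-i}^T$ is the truncation to the first $(N-i)_p$ components.

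Next, I would invoke the spectral property of the Bernstein mass matrix, which should already be available as the preceding Lemma A.1 of \cite{chan2017gpu}: the columns of $\bm{T}$ (i.e.\ the Bernstein coefficient vectors of the modal basis functions) are eigenvectors of $\bm{M}$, and the corresponding eigenvalue $\lambda_{|\bm{\gamma}|}$ depends only on the total degree $|\bm{\gamma}|$.  Coupling this with the $L^2$ orthonormality of $\{L_{\bm{\gamma}}\}$, which gives $\bm{T}^T\bm{M}\bm{T}=\bm{I}$, forces $\bm{T}^T\bm{T}$ itself to be diagonal with entries $1/\lambda_{|\bm{\gamma}|}$.  Applying this identity at both levels produces $\bm{T}_N^T\bm{T}_N = \bm{\Lambda}_N^{-1}$ and $\bm{T}_{N-i}^{-1}\bm{T}_{N-i}^{-T} = \bm{\Lambda}_{N-i}$, where $\bm{\Lambda}_N$ and $\bm{\Lambda}_{N-i}$ denote the diagonal matrices of the corresponding eigenvalues.

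Substituting these diagonal factors collapses the expression to
\begin{equation*}
\tilde{\bm{D}} = \bm{\Lambda}_{N-i}\,\bm{I}_{N-i,N}\,\bm{\Lambda}_N^{-1},
\end{equation*}
which is rectangular diagonal with entry $\lambda^{N-i}_{|\bm{\gamma}|}/\lambda^N_{|\bm{\gamma}|}$ in the $(\bm{\gamma},\bm{\gamma})$ position for $|\bm{\gamma}|\leq N-i$ and zero elsewhere, matching the stated formula.  The main obstacle is the spectral claim itself: without the fact that the modal basis simultaneously diagonalizes $\bm{M}$ with eigenvalues constant on each fixed-degree subspace, the factor $\bm{T}^T\bm{T}$ is only guaranteed symmetric positive definite and no collapse to a diagonal form occurs.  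Since that spectral structure is imported from the cited lemma, the remainder is careful bookkeeping between the two rectangular identity matrices and the change-of-basis factorization.
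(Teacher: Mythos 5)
Your argument is correct. Note first that this paper does not prove Lemma~\ref{lemma:A2} at all --- it is imported verbatim from \cite{chan2017gpu} --- so there is no in-paper proof to compare against; the relevant question is only whether your derivation is sound, and it is. The factorization $\bm{E}^N_{N-i}=\bm{T}_N\,\bm{I}_{N,N-i}\,\bm{T}_{N-i}^{-1}$ is valid precisely because the modal basis is hierarchical and ordered by ascending $|\bm{\gamma}|$, so degree elevation really is zero-padding in modal coordinates; and the collapse of $\bm{T}^T\bm{T}$ to $\bm{\Lambda}^{-1}$ follows from combining $\bm{T}^T\bm{M}\bm{T}=\bm{I}$ (orthonormality of $\{L_{\bm{\gamma}}\}$) with $\bm{M}\bm{T}=\bm{T}\bm{\Lambda}$, which is exactly the spectral fact recorded in this paper as Lemma~\ref{lemma:bbmass} and in \cite{chan2017gpu} as the preceding lemma. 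You correctly identify that spectral ingredient as the one piece that must be imported rather than derived. The reference's own proof is organized slightly differently --- it works with the mixed mass matrix $\left(\bm{E}^N_{N-i}\right)^T\bm{M}_N$, which is rectangular diagonal under the modal transformation by orthogonality, and then peels off the mass matrices --- but this uses the same two facts you use and yields the same bookkeeping, so your route is essentially equivalent rather than genuinely different.
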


\begin{cor}[Corollary A.3 in \cite{chan2017gpu}]\label{lemma:A3}Under a transformation to a modal basis, $\bm{E}^N_{N-i}(\bm{E}_{N-i}^N)^T$ is diagonal, with entries
	\begin{singlespace}
		\begin{equation*}
		\left(\bm{T}_N^{-1}\bm{E}^N_{N-i}\left(\bm{E}^N_{N-i}\right)^T\bm{T}_N\right)_{\bm{\gamma},\bm{\gamma}}=\begin{cases}
		\text{$0$,}&{|\bm{\gamma}|>(N-i)},\\[2ex]
		\text{$\lambda_{|\bm{\gamma}|}^{N-i}/\lambda_{|\bm{\gamma}|}^{N}$,}&{|\bm{\gamma}|\leq (N-i)}.
		\end{cases}
		\end{equation*}
	\end{singlespace}
\end{cor}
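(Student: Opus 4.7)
The plan is to reduce Corollary A.3 to Lemma A.2 by inserting a trivial change of basis between the two factors. Specifically, I would write
\begin{equation*}
\bm{T}_N^{-1}\bm{E}^N_{N-i}\left(\bm{E}^N_{N-i}\right)^T\bm{T}_N = \left(\bm{T}_N^{-1}\bm{E}^N_{N-i}\bm{T}_{N-i}\right)\left(\bm{T}_{N-i}^{-1}\left(\bm{E}^N_{N-i}\right)^T\bm{T}_N\right).
\end{equation*}
The second factor is precisely the rectangular diagonal matrix $\tilde{\bm{D}}\in\mathbb{R}^{(N-i)_p\times N_p}$ supplied by Lemma A.2, whose only nonzero entries are $\lambda^{N-i}_{|\bm{\gamma}|}/\lambda^{N}_{|\bm{\gamma}|}$ on the diagonal for $|\bm{\gamma}|\leq N-i$.

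To identify the first factor, I would observe that $\bm{E}^N_{N-i}$ is the matrix representation, in Bernstein coordinates, of the natural inclusion $P^{N-i}(\widehat{D})\hookrightarrow P^N(\widehat{D})$. Because the modal basis $\{L_{\bm{\gamma}}\}$ used in Lemma A.2 is hierarchical and $L^2$-orthonormal, the first $(N-i)_p$ modal basis functions of degree $N$ coincide with the full modal basis of degree $N-i$. Re-expressed in modal coordinates, the same inclusion is therefore the trivial zero-padding
\begin{equation*}
\bm{T}_N^{-1}\bm{E}^N_{N-i}\bm{T}_{N-i} = \begin{bmatrix} \bm{I}_{(N-i)_p} \\ \bm{0} \end{bmatrix}.
\end{equation*}
I would verify this by acting on an arbitrary degree-$(N-i)$ modal coefficient vector $\bm{c}^L$ and checking that applying $\bm{T}_{N-i}$ then elevating and changing back to the modal basis of degree $N$ yields the same coefficients as simply padding $\bm{c}^L$ with zeros; both routes represent the same polynomial in $P^{N-i}\subset P^N$.

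Combining the two factors then gives
\begin{equation*}
\bm{T}_N^{-1}\bm{E}^N_{N-i}\left(\bm{E}^N_{N-i}\right)^T\bm{T}_N = \begin{bmatrix} \bm{I}_{(N-i)_p} \\ \bm{0} \end{bmatrix}\tilde{\bm{D}},
\end{equation*}
which, since $\tilde{\bm{D}}$ is rectangular diagonal, places $\lambda^{N-i}_{|\bm{\gamma}|}/\lambda^{N}_{|\bm{\gamma}|}$ into the $(\bm{\gamma},\bm{\gamma})$ position for $|\bm{\gamma}|\leq N-i$ and fills the remaining diagonal entries with zero; off-diagonal entries vanish because the right factor has no off-diagonal entries and the left factor is a selection matrix.

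The main obstacle I anticipate is the hierarchical zero-padding identity for the first factor. It relies on the specific Proriol/Koornwinder/Dubiner construction of the modal basis cited in the paper, and so should either be justified in place or deferred to those references to keep the proof self-contained. Once that identity is in hand, the corollary follows by the single line of algebra above.
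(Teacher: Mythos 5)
Your proof is correct. The paper itself does not prove this statement---it imports it verbatim as Corollary A.3 of \cite{chan2017gpu}, immediately after recalling Lemma A.2 from the same reference---so there is no in-paper argument to compare against; your derivation is the natural one and matches how the result follows from Lemma A.2 in the cited work. The one step that carries the weight is the zero-padding identity $\bm{T}_N^{-1}\bm{E}^N_{N-i}\bm{T}_{N-i}=\bigl[\begin{smallmatrix}\bm{I}\\ \bm{0}\end{smallmatrix}\bigr]$, and your justification is sound: since $\bm{E}^N_{N-i}$ represents the same polynomial in a higher-degree basis, and since the modal functions $L_{\bm{\sigma}}$ with $|\bm{\sigma}|>N-i$ are $L^2$-orthogonal to $\mathrm{span}\{L_{\bm{\gamma}}:|\bm{\gamma}|\leq N-i\}=P^{N-i}$, any $p\in P^{N-i}$ has vanishing degree-$N$ modal coefficients beyond index $(N-i)_p$ and unchanged coefficients below it. This uses only the hierarchical orthonormality stated in Section 5.3, not the specific Proriol--Koornwinder--Dubiner construction, so the argument is already self-contained as written.
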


A straightforward extension of Corollary~\ref{lemma:A3} gives the following corollary:
\begin{cor}\label{lemma:basis}
	Under a transformation to a modal basis, $\left\{\bm{E}^N_{N-i}(\bm{E}_{N-i}^N)^T\right\}^N_{i=0}$ is a basis for any $\bm{D}$ such that
	$$\bm{D}=\begin{pmatrix}
d_0 & & &\\
&d_1\bm{I}_1& &\\
& & \ddots&\\
& & & d_N\bm{I}_N 
\end{pmatrix},$$
where $\bm{I}_i$ is the identity matrix of dimension $(i_p-(i-1)_p)\times(i_p-(i-1)_p)$.
	\end{cor}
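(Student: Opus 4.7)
The plan is to work throughout in the modal basis, where Corollary~\ref{lemma:A3} gives an explicit diagonal form for each $\bm{E}^N_{N-i}(\bm{E}^N_{N-i})^T$. Under the modal transformation, the diagonal entry at multi-index $\bm{\gamma}$ is $\lambda_{|\bm{\gamma}|}^{N-i}/\lambda_{|\bm{\gamma}|}^N$ when $|\bm{\gamma}|\le N-i$ and zero otherwise. The critical observation is that this entry depends only on $|\bm{\gamma}|$, not on the individual components of $\bm{\gamma}$. Consequently, each matrix $\bm{E}^N_{N-i}(\bm{E}^N_{N-i})^T$ already has the promised block-scalar shape, with $k$th block equal to $d_k^{(i)}\bm{I}_k$ where $d_k^{(i)} = \lambda_k^{N-i}/\lambda_k^N$ for $k \le N-i$ and $d_k^{(i)} = 0$ otherwise. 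This immediately places all $N+1$ matrices in the family inside the target subspace of block-scalar $\bm{D}$, which itself has dimension exactly $N+1$ (one scalar $d_k$ per block), so it remains only to verify that the family is linearly independent and spans.

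Both will follow from solving, for arbitrary prescribed block values $d_0,\ldots,d_N$, the coefficient problem
\[
\bm{D} = \sum_{i=0}^{N} c_i\, \bm{E}^N_{N-i}(\bm{E}^N_{N-i})^T.
\]
In the modal basis this decouples block-by-block into the scalar system
\[
d_k \;=\; \sum_{i=0}^{N-k} c_i\, \frac{\lambda_k^{N-i}}{\lambda_k^N}, \qquad k = 0, 1, \ldots, N,
\]
where the upper summation limit $N-k$ reflects the vanishing of $d_k^{(i)}$ for $i > N-k$ noted in the previous paragraph.

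The main step is to order the equations by $k = N, N-1, \ldots, 0$ and observe that the resulting system is lower triangular in the $c_i$: the equation indexed by $k$ involves only $c_0,\ldots,c_{N-k}$, and the newly appearing unknown $c_{N-k}$ enters with coefficient $\lambda_k^{k}/\lambda_k^{N}$. Since $\lambda_k^{k}$ is a nonzero eigenvalue of the degree-$k$ Bernstein mass matrix (which is symmetric positive definite, hence has no zero eigenvalues), this pivot is nonzero, and back-substitution produces unique coefficients $c_0, \ldots, c_N$. Existence of a solution gives spanning; uniqueness gives linear independence, and the corollary follows. The one subtle point I would emphasize is the non-vanishing of $\lambda_k^{k}$, but this is immediate from positive-definiteness of the Bernstein mass matrix; everything else is bookkeeping in the modal coordinates afforded by Lemma~\ref{lemma:A2} and Corollary~\ref{lemma:A3}.
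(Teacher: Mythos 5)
Your argument is correct and is exactly the ``straightforward extension'' of Corollary~\ref{lemma:A3} that the paper invokes without writing out: you pass to the modal basis, note each $\bm{E}^N_{N-i}(\bm{E}^N_{N-i})^T$ is block-scalar with blocks determined only by $|\bm{\gamma}|$, and solve the resulting $(N+1)\times(N+1)$ triangular system whose pivots $\lambda_k^k/\lambda_k^N$ are nonzero by positive-definiteness of the Bernstein mass matrices. This matches the paper's intent (it later computes the $c_j$ by solving precisely this linear system), so no further comment is needed.
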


Let $\bm{P}^{N+M}_{N}$ denote the Bernstein polynomial $L^2$ projection operator from the polynomial space of degree $N+M$ to the polynomial space of degree $N$. By transforming to a modal basis,  we observe that the projection operator should be a diagonal rectangular matrix with diagonal entries equal to one, i.e.,
$$\bm{T}_N^{-1}\left(\bm{P}^{N+M}_N\right)\bm{T}_{N+M}=
\left(\begin{array}{c|c}
\bm{I}& \bm{0}
\end{array}\right)
$$
where $\bm{T}_N,\bm{T}_{N+M}$ are basis transformation matrices between Bernstein and modal bases of degree $N$ and $N+M$ respectively. Based on this observation, we have the following theorem:
\begin{thm}\label{thm:main}
	There exist $c_j$,  $0\leq j\leq N$, such that 
	\begin{equation}
	\bm{P}^{N+M}_N=\sum_{j=0}^{N} c_j\bm{E}^N_{N-j}\left(\bm{E}^N_{N-j}\right)^T\left(\bm{E}^{N+M}_N\right)^T.
	\end{equation}
	\label{thm:bbinvM}
	 \end{thm}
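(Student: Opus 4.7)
The plan is to verify the identity by passing to a hierarchical $L^2$-orthonormal (modal) basis, in which every matrix appearing in the theorem is either diagonal or rectangular diagonal, and then reducing the resulting matrix equation to a triangular scalar system in the unknowns $c_j$.

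First, I would multiply the claimed identity on the left by $\bm{T}_N^{-1}$ and on the right by $\bm{T}_{N+M}$.  The left-hand side becomes exactly $[\bm{I}_{N_p}\mid\bm{0}]$, since hierarchicality and $L^2$-orthonormality of the modal bases imply that the $L^2$ projection onto $P^N$ simply truncates modal coefficients of degree greater than $N$.  For the right-hand side, I would insert $\bm{T}_N\bm{T}_N^{-1}$ between the two elevation factors in each summand, yielding the product
\[
\bigl(\bm{T}_N^{-1}\bm{E}^N_{N-j}(\bm{E}^N_{N-j})^T\bm{T}_N\bigr)\bigl(\bm{T}_N^{-1}(\bm{E}^{N+M}_N)^T\bm{T}_{N+M}\bigr).
\]
The first factor is $N_p\times N_p$ diagonal by Corollary~\ref{lemma:A3}, while the second factor is $N_p\times(N+M)_p$ rectangular diagonal by Lemma~\ref{lemma:A2} applied with $(N,i)\mapsto (N+M,M)$.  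Multiplying them, the $j$th summand takes the form $[\bm{D}_j\mid\bm{0}]$, where $\bm{D}_j$ is diagonal with entry $\lambda_{|\bm{\gamma}|}^{N-j}/\lambda_{|\bm{\gamma}|}^{N+M}$ whenever $|\bm{\gamma}|\le N-j$ and zero otherwise.

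Matching modal representations then reduces the theorem to the matrix equation $\sum_{j=0}^N c_j \bm{D}_j = \bm{I}_{N_p}$.  Because the entries of $\bm{D}_j$ depend on $\bm{\gamma}$ only through the degree $k=|\bm{\gamma}|$, this equation decouples into the $N+1$ scalar relations
\[
\sum_{j=0}^{N-k} c_j\, \lambda_k^{N-j} = \lambda_k^{N+M},\qquad k=0,1,\ldots,N.
\]
Ordered by decreasing $k$, this is a lower triangular linear system in $(c_0,\ldots,c_N)$ with diagonal entries $\lambda_k^{k}>0$ (since each $\bm{M}_k$ is symmetric positive definite), so the system is uniquely solvable by back substitution starting from $c_0=\lambda_N^{N+M}/\lambda_N^N$.

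The main obstacle is bookkeeping rather than any deep mathematics: I need to verify carefully that the product of a diagonal and a rectangular diagonal matrix has the claimed entries, and that the hierarchical ordering of the modal basis aligns the ``top-left'' block carved out by the rectangular diagonal with the identity block representing the projection.  Once those alignments are pinned down, existence of the $c_j$ follows immediately from the triangular solve, and no analytic or approximation arguments are needed.
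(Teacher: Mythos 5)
Your proposal is correct and follows essentially the same route as the paper: conjugate to the hierarchical modal basis, use Lemma~\ref{lemma:A2} to diagonalize $\left(\bm{E}^{N+M}_N\right)^T$ and Corollary~\ref{lemma:A3} to diagonalize the outer products $\bm{E}^N_{N-j}\left(\bm{E}^N_{N-j}\right)^T$, and match the result against the rectangular identity block representing $\bm{P}^{N+M}_N$. The only added value is that your explicit lower-triangular system $\sum_{j=0}^{N-k}c_j\lambda_k^{N-j}=\lambda_k^{N+M}$ with positive pivots $\lambda_k^k$ actually proves the existence (and uniqueness) of the $c_j$, a step the paper delegates to Corollary~\ref{lemma:basis} without proof.
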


\begin{proof}
From Lemma~\ref{lemma:A2}, we know that
$$\bm{T}^{-1}_{N}\left(\bm{E}^{N+M}_N\right)^T\bm{T}_{N+M}=\left(\begin{array}{cccc|ccc}
   \frac{\lambda_0^{N}}{\lambda_0^{N\!+\!M}}&& &&0&\cdots&0\\
 &\frac{\lambda_1^{N}}{\lambda_1^{N\!+\!M}}\bm{I}_1& &&\vdots&&\vdots\\
    & &\ddots &&\vdots&&\vdots \\
   && &\frac{\lambda_N^{N}}{\lambda_N^{N\!+\!M}}\bm{I}_N&0&\cdots&0\\
   \end{array}\right),$$
which is a rectangular diagonal diagonal matrix. By Corollary~\ref{lemma:basis}, there exist $c_j,\ 0\leq j\leq N$, such that
$$\sum_{j=0}^{N} c_j\bm{T}^{-1}_N\bm{E}^N_{N-j}\left(\bm{E}^N_{N-j}\right)^T\bm{T}_N=\begin{pmatrix}
\frac{\lambda_0^{N\!+\!M}}{\lambda_0^{N}} & & &\\
&\frac{\lambda_1^{N\!+\!M}}{\lambda_1^{N}}\bm{I}_1& &\\
& & \ddots&\\
& & & \frac{\lambda_N^{N\!+\!M}}{\lambda_N^{N}}\bm{I}_N 
\end{pmatrix}.$$
Then, we obtain
$$\sum_{j=0}^{N} c_j\bm{T}_N^{-1}\bm{E}^N_{N-j}\left(\bm{E}^N_{N-j}\right)^T\left(\bm{E}^{N+M}_N\right)^T\bm{T}_{N+M}=\bm{T}_N^{-1}\left(\bm{P}^{N+M}_N\right)\bm{T}_{N+M}.$$
Multiplying $\bm{T}_N$ and $\bm{T}_{N+M}^{-1}$ from left and right hand side, respectively, gives
\begin{equation}
\bm{P}^{N+M}_N=\sum_{j=0}^{N} c_j\bm{E}^N_{N-j}\left(\bm{E}^N_{N-j}\right)^T\left(\bm{E}^{N+M}_N\right)^T.
\label{eq:decomp}
\end{equation}
\end{proof}

In practice, these coefficient $c_j$ can be computed by solving a linear system.  Table~\ref{tab:1} shows values of $c_j$ for several combinations of degree $N$ and $M$ in three dimensions.
\begin{table}
	\centering
	\begin{tabular}{|c||c|c|c|c|c|c|c|} 
		\hline
		& $c_0$ & $c_1$ & $c_2$ & $c_3$ & $c_4$ & $c_5$ \\
		\hline
		$N=2,\  M=1$  &          $0.6667$  &  -0.0667 &    &   &  &  \\
		\hline
		$N=2,\ M=2$ &    1.0000&    -0.3810 &    0.0238 &    &    &   \\
		\hline
		$N=3,\ M=1$ &   1.6000 &  -0.8000 &   0.1333 &   -0.0048 &   & \\
		\hline
		
		$N=3,\ M=2$ &    1.8182 &    -1.2121 &    0.2273&    -0.0087&    &  \\
		\hline
		$N=4, \ M=1$ &    2.0833 &    -1.5152 &    0.4545 &    -0.0505 &    0.0013 &  \\
		\hline
		$N=4,\ M=2$ &    2.8846 &   -2.7972&    0.9441 &    -0.1119&    0.0029 &  \\
		\hline
		$N=5, \ M=1$ &    2.5714 &    -2.4725 &    1.0989 &    -0.2248 &    0.0180 & -0.0003 \\
		\hline
		$N=5,\ M=2$ &    4.2000 &    -5.3846 &    2.6923 &    -0.5874 &    0.0490 &-0.0009 \\
		\hline
	\end{tabular}  
\caption{Coefficients $c_j$ for the Bernstein polynomial projection matrix $\bm{P}^{M+N}_N$ for different choices of degree $N$ and $M$.}
\label{tab:1}
\end{table}

\subsection{A note on fast mass matrix inversion}\label{sec:bbmass}

It should be noted that the approach described in Theorem~\ref{thm:main} is in fact applicable to matrices beyond the polynomial projection matrix.  For example, since the Bernstein mass matrix is diagonal under a modal basis \cite{kirby2017fast}, the inverse Bernstein mass matrix can also be represented as a combination of degree elevation matrices.  We start with an interesting observation in the proof of Lemma~\ref{lemma:A2} (see \cite{chan2017gpu}):
\begin{lem}\label{lemma:bbmass} Let $\bm{M}_N$ be the Bernstein mass matrix of degree $N$. Under a transformation to a modal basis,  the inverse $\bm{M}^{-1}_N$ is diagonal given by
\begin{equation}
\bm{T}_N^{-1}\bm{M}^{-1}_N\bm{T}_N=\begin{pmatrix}
\frac{1}{\lambda^N_0} & & &\\
&\frac{1}{\lambda^N_1}\bm{I}_1& &\\
& & \ddots&\\
& & & \frac{1}{\lambda^N_N}\bm{I}_N 
\end{pmatrix},
\label{eq:bbmassdecomp}
\end{equation}
where $\lambda^{N}_j$ is the $j$th distinct eigenvalue of $\bm{M}_N$.
\end{lem}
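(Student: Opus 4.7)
The plan is to extract the statement as a direct corollary of the argument behind Lemma~\ref{lemma:A2} (cf.~\cite{chan2017gpu}). The essential input is that the hierarchical modal basis is simultaneously $L^2$-orthonormal and eigendecomposes the Bernstein mass matrix, with eigenvalues depending only on the modal degree $|\bm{\gamma}|$.

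First I would record two independent facts about the change-of-basis matrix $\bm{T}_N$. Since $\{L_{\bm{\gamma}}\}$ is $L^2$-orthonormal, the Gram matrix of the modal basis is the identity; expressing this Gram matrix in terms of Bernstein coefficients gives the congruence identity $\bm{T}_N^{T}\bm{M}_N\bm{T}_N=\bm{I}$, and hence $\bm{M}_N^{-1}=\bm{T}_N\bm{T}_N^{T}$. Second, as observed inside the proof of Lemma~\ref{lemma:A2}, the columns of $\bm{T}_N$ (which are the Bernstein coefficient vectors of the modal basis functions) are eigenvectors of $\bm{M}_N$, with common eigenvalue $\lambda^N_{k}$ for all $\bm{\gamma}$ with $|\bm{\gamma}|=k$. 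In matrix form this reads $\bm{M}_N\bm{T}_N=\bm{T}_N\bm{\Lambda}$, where $\bm{\Lambda}$ is the block-diagonal matrix with blocks $\lambda^N_k\bm{I}_k$.

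Combining these two identities yields $\bm{T}_N^{T}\bm{T}_N\bm{\Lambda}=\bm{I}$, i.e.\ $\bm{T}_N^{T}\bm{T}_N=\bm{\Lambda}^{-1}$, whence
\begin{equation*}
\bm{T}_N^{-1}\bm{M}_N^{-1}\bm{T}_N \;=\; \bm{T}_N^{-1}\bigl(\bm{T}_N\bm{T}_N^{T}\bigr)\bm{T}_N \;=\; \bm{T}_N^{T}\bm{T}_N \;=\; \bm{\Lambda}^{-1},
\end{equation*}
which is precisely the block-diagonal form asserted in the lemma. The key point is that the congruence by $\bm{T}_N$ and the similarity by $\bm{T}_N$ agree (up to inversion of the spectrum) because the columns of $\bm{T}_N$ are both eigenvectors of $\bm{M}_N$ and orthogonal with respect to the weighted inner product it defines.

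The main obstacle is the spectral claim used in the second step: that $\bm{M}_N$ is diagonalized by the modal transformation and that its eigenvalues are grouped by total degree. This is not obvious from the definition of $\bm{M}_N$ and relies on the specific construction of the hierarchical modal basis from tensor products of Jacobi polynomials together with the interaction between degree-elevation operators and modal decomposition that is worked out in the proof of Lemma~\ref{lemma:A2}. Once that structural fact is granted---as the introduction to this lemma indicates it should be---the conclusion follows from the short algebraic rearrangement above.
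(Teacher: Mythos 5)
Your argument is correct and, like the paper, ultimately rests on the spectral fact imported from the proof of Lemma~\ref{lemma:A2} in \cite{chan2017gpu} --- the paper itself gives no proof here, presenting the lemma purely as an observation taken from that reference, so your reduction to the same imported fact matches its approach. One caveat: the ``second fact'' you invoke ($\bm{M}_N\bm{T}_N=\bm{T}_N\bm{\Lambda}$ with eigenvalues grouped by total degree $|\bm{\gamma}|$) is already equivalent to the lemma's conclusion upon taking inverses, so the detour through $\bm{M}_N^{-1}=\bm{T}_N\bm{T}_N^{T}$ is logically superfluous as a proof of the statement, although it is sound algebra and yields the correct side identity $\bm{T}_N^{T}\bm{T}_N=\bm{\Lambda}^{-1}$ linking the orthonormality of the modal basis to the spectrum of $\bm{M}_N$.
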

 
Applying Corollary~\ref{lemma:basis} to (\ref{eq:bbmassdecomp}) directly, we obtain the following theorem:
\begin{thm}\label{thm:bbmass} There exist $c_j$, $0\leq j\leq N$, such that, the inverse of Bernstein mass matrix  can be written as
	\begin{equation}
	\bm{M}^{-1}_N= \sum_{j=0}^{N}c_j\bm{E}^N_{N-j}\left(\bm{E}^{N}_{N-j}\right)^T.
	\label{eq:mass}
	\end{equation}
\end{thm}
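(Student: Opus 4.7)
My plan is to reduce the claim to a direct coefficient-matching argument in the modal basis, exploiting the two structural results that immediately precede the theorem statement. The overall approach is: pass both sides of the proposed identity through the basis change $\bm{T}_N^{-1}(\cdot)\bm{T}_N$, verify that both sides live in the same low-dimensional subspace of block-diagonal matrices, and then invoke Corollary~\ref{lemma:basis} to conclude that suitable coefficients $c_j$ must exist.

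First, I would conjugate the desired identity by $\bm{T}_N$. On the left-hand side, Lemma~\ref{lemma:bbmass} tells me that
\[
\bm{T}_N^{-1}\bm{M}^{-1}_N\bm{T}_N
\;=\;
\begin{pmatrix}
\frac{1}{\lambda^N_0} & & &\\
&\frac{1}{\lambda^N_1}\bm{I}_1& &\\
& & \ddots&\\
& & & \frac{1}{\lambda^N_N}\bm{I}_N
\end{pmatrix},
\]
which has precisely the block-diagonal structure (scalar multiple of identity on each block indexed by $|\bm{\gamma}|$) featured in Corollary~\ref{lemma:basis}. On the right-hand side, each summand $\bm{T}_N^{-1}\bm{E}^N_{N-j}(\bm{E}^N_{N-j})^T\bm{T}_N$ is, by Corollary~\ref{lemma:A3}, also a diagonal matrix whose entries are constant on the blocks indexed by $|\bm{\gamma}|$, equal to $\lambda_{|\bm{\gamma}|}^{N-j}/\lambda_{|\bm{\gamma}|}^{N}$ for $|\bm{\gamma}|\leq N-j$ and zero otherwise. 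Thus both sides lie in the $(N+1)$-dimensional subspace $\mathcal{S}$ of matrices of the form considered in Corollary~\ref{lemma:basis}.

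Corollary~\ref{lemma:basis} states that the family $\{\bm{T}_N^{-1}\bm{E}^N_{N-j}(\bm{E}^N_{N-j})^T\bm{T}_N\}_{j=0}^{N}$ is a basis of $\mathcal{S}$. Hence there exist unique coefficients $c_0,\ldots,c_N$ such that
\[
\sum_{j=0}^{N} c_j\,\bm{T}_N^{-1}\bm{E}^N_{N-j}\left(\bm{E}^N_{N-j}\right)^T\bm{T}_N \;=\; \bm{T}_N^{-1}\bm{M}_N^{-1}\bm{T}_N.
\]
Conjugating back by $\bm{T}_N$ on the left and $\bm{T}_N^{-1}$ on the right yields the claimed identity~(\ref{eq:mass}), completing the argument. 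Concretely, the coefficients can be determined by matching the diagonal block values: $c_j$ solves the upper-triangular system $\sum_{j=0}^{N-k} c_j\,\lambda_k^{N-j}/\lambda_k^N = 1/\lambda_k^N$ for $k=0,1,\ldots,N$.

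The only potential obstacle is verifying that the ``basis'' claim of Corollary~\ref{lemma:basis} applies here; this amounts to checking that the matrix which takes $(c_0,\ldots,c_N)$ to the diagonal block values is invertible. Because the system is upper triangular with nonzero diagonal entries $\lambda_k^N/\lambda_k^N = 1$ arising from the $j=0$ term (where $\bm{E}^N_N$ is the identity), invertibility is automatic. Everything else is bookkeeping, and no new computation beyond what is already contained in Lemma~\ref{lemma:A2}, Corollary~\ref{lemma:A3}, Corollary~\ref{lemma:basis}, and Lemma~\ref{lemma:bbmass} is needed.
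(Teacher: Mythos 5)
Your proposal is correct and follows essentially the same route as the paper, whose entire proof is ``Applying Corollary~\ref{lemma:basis} to (\ref{eq:bbmassdecomp}) directly''; you have simply spelled out the conjugation by $\bm{T}_N$ and the coefficient-matching that the paper leaves implicit. The extra remark about the triangular system for the $c_j$ is harmless bonus detail (the paper solves for the $c_j$ numerically and already asserts the basis property in Corollary~\ref{lemma:basis}), so nothing further is needed.
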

Using (\ref{eq:mass}), the inverse of a Bernstein mass matrix can be represented as a linear combination of sparse Bernstein degree elevation matrices.  Thus, we can apply $\bm{M}^{-1}_N$ using the expression (\ref{eq:telescope}), which requires $O(N^4)$ operations in 3D.  Since WADG requires only applications of $\bm{V}_q$ and $\bm{P}_q = \bm{M}^{-1}\bm{V}_q^T\bm{W}$, by combining fast mass matrix inversion with efficient $O(N^4)$ algorithms for evaluating Bernstein polynomials at quadrature points \cite{ainsworth2011bernstein}, it is possible to implement 3D quadrature-based WADG in $O(N^4)$ total operations.\footnote{Fast Bernstein mass matrix inversion could also be performed using the algorithm described in \cite{kirby2017fast}.  However, as noted by Kirby, this approach is more involved and may be difficult to implement efficiently on GPUs.}  

In light of these results, one may then ask why we bother with the strategy presented in Section~\ref{sec:BBWADG}, which involves both approximation of the weight function and specialized algorithms for polynomial multiplication and polynomial $L^2$ projection.  The answer lies in the nature of the coefficients $c_j$.  We observe that, when representing the Bernstein mass matrix inverse using (\ref{eq:mass}), the coefficients $c_j$ are highly oscillatory with large positive and negative components (see Table~\ref{tab:2}), which can result in significant numerical roundoff in the application of $\bm{M}^{-1}$ using (\ref{eq:mass}).  In contrast, the coefficients used to represent the Bernstein polynomial projection matrix are much less oscillatory (see Table~\ref{tab:1}) and result in less roundoff error.  

We can estimate sensitivity of \note{(\ref{eq:decomp}) and (\ref{eq:mass})} to roundoff by computing 
\begin{equation}
\sum_{j=0}^N |c_j| 
\label{eq:cond}
\end{equation}
\note{In the context of numerical quadrature with negative weights, the quantity (\ref{eq:cond}) is referred to as the condition number} of a quadrature rule \cite{antil2013two}.  For $N = 7$, the value of (\ref{eq:cond}) is approximately $1.67\times 10^7$ for $\bm{M}^{-1}$.  In contrast, for $N= 7$, the value of (\ref{eq:cond}) for $\bm{P}^{M+N}_N$ is approximately $14.53$ for $M = 1$ and $41.35$ for $M = 2$.  

\note{We also investigated roundoff errors numerically by computing the difference between $\bm{M}^{-1}\bm{b} - \bm{e}$ (where $\bm{M}^{-1}$ is computed using backslash in Matlab) and the quantity 
\[
\sum_{j=0}^{N}c_j\bm{E}^N_{N-j}\left(\bm{E}^{N}_{N-j}\right)^T\bm{b} - \bm{e}.
\]
  Here, $\bm{e}$ is the vector of all ones and $\bm{b} = \bm{M}\bm{e}$.  In the absence of roundoff errors, both quantities should be zero.  However, for all $N$, the roundoff error in applying $\bm{M}^{-1}$ using (\ref{eq:mass}) is larger than the roundoff error incurred when using Matlab's backslash directly.  Since the Bernstein mass matrix $\bm{M}$ is already known to become highly ill-conditioned as $N$ increases \cite{ainsworth2011bernstein, chan2016short}, these numerical experiments suggest that evaluating $\bm{M}^{-1}$ using (\ref{eq:mass}) is impractical for large $N$.}


\begin{table}
	\centering
	\begin{tabular}{|c||c|c|c|c|c|c|} 
		\hline
		& $c_0$ & $c_1$ & $c_2$ & $c_3$& $c_4$&$c_5$\\
		\hline
		$N=1$  &      15  &  -3 &    &   &&  \\
		\hline
		$N=2$ &    157.5&    -90 &  7.5   &    &&   \\
		\hline
		$N=3$ &   1260 &  -1260 & 315   &   -15&&  \\
		\hline
		$N=4$ &   8662.5 &  -12600 &5670   &   -840&26.25&  \\
		\hline
		$N=5$ &   54054 &  -103950 & 69300   &   -18900&1890&-42  \\
		\hline
		\end{tabular}  
\caption{Coefficient $c_j$ for $\bm{M}^{-1}$ represented using (\ref{eq:mass}) for different orders $N$.}
\label{tab:2}
\end{table}

\subsection{GPU algorithms}
\label{sec:bbalgo}

In this section, we describe GPU-accelerated algorithms for Bernstein polynomial multiplication and polynomial $L^2$ projection.  

\subsubsection{Polynomial multiplication}
\note{For polynomial multiplication, we aim to compute Bernstein coefficients of of the product $h(\bm{x})=f(\bm{x})g(\bm{x})$}, where $f(\bm{x}),g(\bm{x})$ are Bernstein polynomials of degree $N$ and degree $M$, respectively.  
From (\ref{eq:mcoeff}), we observe that each coefficient of $h(\bm{x})$ is a linear combination of at most $M_p$ products of coefficients from $f$ and $g$ as follows
\note{
\begin{equation}
\begin{split}
c_{\bm{\gamma}} &= \sum_{|\bm{\beta}|=M}a_{\bm{\gamma}-\bm{\beta}}b_{\bm{\beta}}\frac{\binom{\bm{\gamma}}{\bm{\beta}}}{\binom{N+M}{N}} = \sum_{|\bm{\beta}|=M}a_{\bm{\gamma}-\bm{\beta}}b_{\bm{\beta}} \ell_{\bm{\beta}},
\end{split}
\label{eq:co1}
\end{equation}
where $a_{\bm{\gamma}-\bm{\beta}}$ and $b_{\bm{\beta}}$ are coefficients of $f$ and $g$, respectively.  In our implementation, we store the coefficients $\ell_{\bm{\beta}}$ in some sparse matrix, where the row and column indices correspond to the multi-indices $\bm{\gamma}$ and $\bm{\beta}$, respectively.  Each thread will load non-zero entries in a row of this matrix along with the corresponding coefficients $b_j$ and $a_{\bm{\gamma}-\bm{e}_j}$, compute one of the coefficients $c_{\bm{\gamma}}$, and store the result into shared memory.  }

\subsubsection{Polynomial $L^2$ projection}
\label{sec:polygpu}
\note{We now introduce an algorithm to evaluate the polynomial $L^2$ projection based on (\ref{eq:decomp}).  Unfortunately, it is difficult to directly evaluate (\ref{eq:decomp}) in a low-complexity fashion}.  This is because the degree elevation matrices $\bm{E}^N_{N-j}$ transition from sparse to dense matrices as $j$ increases.  Instead, we evaluate (\ref{eq:decomp}) using an equivalent reformulation.  By plugging (\ref{eq:elepro}) into (\ref{eq:decomp}), we can derive a ``telescoping form'' for $\bm{P}^{M+N}_N$ involving sparse one-degree elevation matrices
\begin{align}
\bm{P}^{N\! +\! M}_N\! &=\! \left(\! c_0\bm{I}\! +\! \bm{E}^N_{N\! -\! 1}\! \left(\! c_1\bm{I}\! +\! \bm{E}^{N\! -\! 1}_{N\! -\! 2}\! \left(\! c_2\bm{I}\! +\! \cdots\! \right)\! \left(\! \bm{E}^{N\! -\! 1}_{N\! -\! 2}\! \right)^T\right)\! \left(\! \bm{E}^N_{N\! -\! 1}\! \right)^T\right)\! \left(\! \bm{E}^{N\! +\! 1}_N\! \right)^T\! \cdots\! \left(\! \bm{E}^{N\! +\! M}_{N\! +\! M\! -\! 1}\! \right)^T \label{eq:telescope} \\
&= \tilde{\bm{P}}_N \left(\! \bm{E}^{N\! +\! 1}_N\! \right)^T\! \cdots\! \left(\! \bm{E}^{N\! +\! M}_{N\! +\! M\! -\! 1}\! \right)^T\nonumber
\end{align}
where we have defined $\tilde{\bm{P}}_N = \left(\! c_0\bm{I}\! +\! \bm{E}^N_{N\! -\! 1}\! \left(\! c_1\bm{I}\! +\! \bm{E}^{N\! -\! 1}_{N\! -\! 2}\! \left(\! c_2\bm{I}\! +\! \cdots\! \right)\! \left(\! \bm{E}^{N\! -\! 1}_{N\! -\! 2}\! \right)^T\right)\! \left(\! \bm{E}^N_{N\! -\! 1}\! \right)^T\right)\!$.  We next provide an algorithm to efficiently evaluate this telescoping expression on GPUs. 

\note{The first step in applying $\bm{P}^{N+M}_N$ is to apply the product of degree reduction matrices $\left(\! \bm{E}^{N\! +\! 1}_N\! \right)^T\! \cdots\! \left(\! \bm{E}^{N\! +\! M}_{N\! +\! M\! -\! 1}\! \right)^T$.  Since each of these matrices is sparse and requires $O(N^d)$ operations to apply, this step has an overall computational complexity of $O(N^d)$ for fixed $M$.}

\note{The next step applies $\tilde{\bm{P}}_N$ to the degree-reduced result.  }We separate the application of $\tilde{\bm{P}}_N$ into two parts.  The first part applies the one-degree reduction matrices in a ``downward'' sweep,  while the second applies the one-degree elevation matrices in an ``upward'' sweep (see Fig.~\ref{fig:bbproj} for an illustration).  Both the application of degree elevation or reduction operators and accumulate results during each step simultaneously.  

\note{We briefly describe our GPU implementation used to apply $\tilde{\bm{P}}_N$.  Let $\bm{p}$ be some vector to which we will apply $\tilde{\bm{P}}_N$. } In the first step, we set $\bm{p}_s=\bm{p}$, then compute the product of $\left(\bm{E}^i_{i-1}\right)^T$ and the matrix-vector product $\bm{p}_s$ stored in shared memory. More specifically, each thread computes the dot product of a sparse row of $\left(\bm{E}^i_{i-1}\right)^T$ with the vector $\bm{p}_s$. The resulting output vector $\bm{q}_s$ will be stored in another shared memory array and transfered to $\bm{p}_s$ after all threads complete their computation. At the same time, $\bm{q}_s$ will be scaled by the constant \note{$c_{\bm{\gamma}}$ in (\ref{eq:co1})} and stored in thread-local register memory. 

For the second part, we compute the product of $\bm{E}^i_{i-1}$ and the vector $\bm{p}_s$ in shared memory, and accumulate results with the values in register memory during each step. More specifically, each thread computes the dot product of a sparse row of $\bm{E}^i_{i-1}$ with $\bm{p}_s$, and the result will be added to the corresponding value in register memory. After the accumulation, the values in register memory will be transfered to $\bm{p}_s$ in shared memory, which will be used in the next step. 

\begin{figure}[!t]
	\centering
	\includegraphics[width=0.75\linewidth]{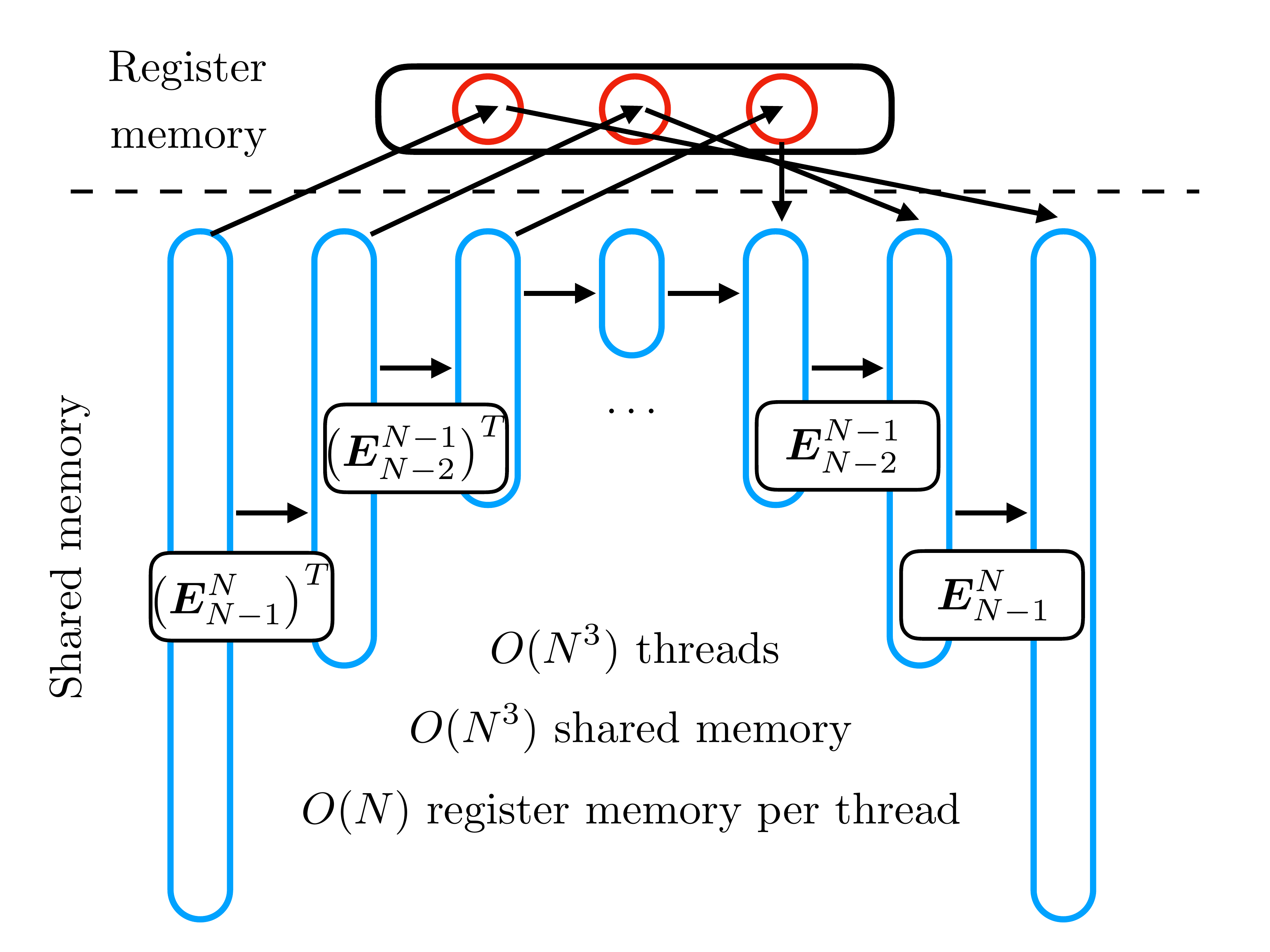}
	\caption[Illustration of GPU algorithm for the polynomial $L^2$ projection]{Illustration of GPU algorithm for the polynomial $L^2$ projection}
	\label{fig:bbproj}
\end{figure}

In our algorithm, the multiplication of two Bernstein polynomials can be \note{computed in $O(N^d)$ operations}. For the polynomial $L^2$ projection, each application of $\bm{E}^i_{i-1}$ or $\left(\bm{E}^i_{i-1}\right)^T$ requires $O(N^d)$ operations. We need to apply $N$ one-degree elevation operators and $N+M$ one-degree reduction operators, resulting in a total asymptotic complexity of $O(N^{d+1})$ \note{for fixed $M$}. This reduces the computational complexity of the projection step in WADG from $O(N^{6})$ to $O(N^{4})$ in three dimensions.

\section{Numerical results}\label{sec:result}In this section, we examine the accuracy and performance of BBWADG.  For \note{clarity}, we refer to WADG as the quadrature-based weight-adjusted discontinuous Galerkin method. This section is divided into four parts: in Section~\ref{sec:converge}, we discuss accuracy of BBWADG using the method of manufactured solutions; In Section~\ref{sec:wavespeed}, we test BBWADG for wavespeed with different frequencies; in Section~\ref{sec:runtime}, we present runtime comparisons between BBWADG and WADG; in Section~\ref{sec:perform}, we present results which quantify the computational efficiency of BBWADG.

\subsection{Convergence for heterogeneous media}\label{sec:converge}
In this section, we investigate the convergence of BBWADG to manufactured solutions. In two dimensions, we assume that the pressure $p(x,y,\tau)$ is of the form
\begin{equation}
p\left(x,y,\tau\right) = \sin\left(\pi x\right)\sin\left(\pi y\right)\cos\left(\pi \tau\right).
\label{eq:pressure2d}
\end{equation}
We take the corresponding velocity vector as follows 
\begin{equation*}
\bm{u}=\begin{pmatrix}
u\\v
\end{pmatrix}=
\begin{pmatrix}
-\cos(\pi x)\sin(\pi y)\sin(\pi \tau)\\
-\sin(\pi x)\cos(\pi y)\sin(\pi \tau)
\end{pmatrix}.
\end{equation*}
Because this is not a solution of the acoustic wave equation in heterogeneous media, we utilize the method of manufactured solutions and add a source term $f(x,y,\tau)$ for which $p(x,y,\tau)$ is a solution.
Plugging $p,\bm{u}$ into (\ref{eq:awave}), we obtain the source term $f$
\begin{equation*}
\begin{split}
f(x,y,\tau) & = \frac{1}{c^2(x,y)}\frac{\partial p}{\partial \tau}+\nabla\cdot \bm{u}\\
&= -\frac{1}{c^2(x,y)}\pi\sin(\pi x)\sin(\pi y)\sin(\pi \tau)+2\pi\sin(\pi x)\sin(\pi y)\sin(\pi \tau)\\
&= \left(2-\frac{1}{c^2(x,y)}\right)\pi\sin(\pi x)\sin(\pi y)\sin(\pi \tau).
\end{split}
\end{equation*}
Similarly, in three dimensions, we assume the pressure $p(x,y,z,\tau)$ satisfies 
$$
p\left(x,y,z,\tau\right) = \sin\left(\pi x\right)\sin\left(\pi y\right)\sin\left(\pi z\right)\cos\left(\pi \tau\right).
$$
We can compute the corresponding velocity vector as follows 
\begin{equation*}
\bm{u}=\begin{pmatrix}
u\\v\\w
\end{pmatrix}=
\begin{pmatrix}
-\cos(\pi x)\sin\left(\pi y\right)\sin(\pi z)\sin(\pi \tau)\\[.5ex]
-\sin(\pi x)\cos\left(\pi y\right)\sin(\pi z)\sin(\pi \tau)\\[.5ex]
-\sin(\pi x)\sin\left(\pi y\right)\cos(\pi z)\sin(\pi \tau)
\end{pmatrix}.
\end{equation*}
Plugging $p,\bm{u}$ into (\ref{eq:awave}), we obtain the source term $f$ 
\begin{equation*}
\begin{split}
f(x,y,z,\tau) & = \frac{1}{c^2(x,y,z)}\frac{\partial p}{\partial \tau}+\nabla\cdot \bm{u}\\
&= \left(3-\frac{1}{c^2(x,y,z)}\right)\pi\sin(\pi x)\sin\left(\pi y\right)\sin(\pi z)\sin(\pi \tau).
\end{split}
\end{equation*}
In numerical experiments, we choose the wavespeed as
\begin{equation*}
c^2(x,y,z) = 1+\frac{1}{2}\sin(\pi x)\sin(\pi y)
\end{equation*}
for two dimensions and 
\begin{equation*}
c^2(x,y,z) = 1+\frac{1}{2}\sin(\pi x)\sin(\pi y)\sin\left(\pi z\right).
\end{equation*}
for three dimensions. In BBWADG, we project $c^2$ onto a polynomial space of degree $M$ in $L^2$ sense. 

Fig.~\ref{fig:con2d} and Fig.~\ref{fig:con3d} show the convergence of BBWADG and WADG to the manufactured solution under mesh refinement. The 3D uniform meshes used in our experiments are generated by GMSH \cite{geuzaine2009gmsh}. From these plots, we observe that the convergence rate of BBWADG is $O(h^r)$, where $r=2$ when $M=0$ and $r=\min\{N+1,M+3\}$ when $M\geq1$. We note that rates of convergence only observed when $c^2$ is approximated using the polynomial $L^2$ projection onto $P^M$. For other approximations (e.g. piecewise linear interpolation), the convergence rates are $O(h^M)$ in general. 

It should be noted that these rates of convergence are better than those suggested by an initial error analysis.  It is straightforward to extend the error analysis of \cite{chan2017weight, chan2018weight} to accomodate approximations of $c^2 \in P^M$.  However, this extension predicts that, when $c^2$ is approximated using $L^2$ projection onto degree $M$ polynomials, the $L^2$ error should converge at a rate of $O(h^{M+1})$.  This rate is observed only for $M=0$, and the source of the discrepancy between the predicted and observed rates for $M> 0$ is presently unclear to the authors.


\note{Increasing from $M=0$ to $M=1$ increases the observed rate of convergence by 2 orders}.  In contrast, increasing $M$ further only increases the observed rate of convergence by one order for each degree past $M=1$.  \note{For this reason, $M=1$ may be an attractive choice for practical computations, since it provides a larger improvement in terms of rates of convergence relative to the increase in computational cost.}  

\begin{figure}[]
	\centering
	\subfloat[Convergence for $N=4$]{
		\begin{tikzpicture}
		\begin{loglogaxis}[
		width=0.48\textwidth,
		height=0.42\textwidth,
		xlabel= Mesh size,
		ylabel= $L^2$ error,
		ticklabel style = {font=\tiny},
		xlabel style={font=\footnotesize},
		ylabel style={font=\footnotesize},
		legend style={font=\tiny},
		legend pos = south east
		]
			\addplot[color=red,mark=square*] coordinates {
				(0.5,0.1575135 )
				(0.25,0.02778378)
				(0.125,0.00701356)
				(0.0625,0.001763432)
				(0.0312,0.0004414745)
			};

			\addplot[color=blue,mark=otimes*] coordinates {
				(0.5,0.00590602 )
				(0.25,0.0004771556)
				(0.125,0.000032725844293)
				(0.0625,0.0000021917583)
				(0.0312,0.000000138034585)
			};
			
			\addplot[color=red,mark=triangle*] coordinates {
				(0.5,0.001672084 )
				(0.25,0.000029571618)
				(0.125,0.000000659821706)
				(0.0625,0.000000019551547)
				(0.0312,0.000000000582975)
			};
			
			\addplot[color=blue,mark=x] coordinates {
				(0.5,0.00080421 )
				(0.25,0.0000167281)
				(0.125,0.00000039029)
				(0.0625,0.0000000109402)
				(0.0312,0.000000000329775504)
			};
			
			\logLogSlopeTriangle{0.2}{0.1}{0.085}{5}{blue};
			\logLogSlopeTriangle{0.2}{0.1}{0.665}{2}{red};
			\logLogSlopeTriangle{0.2}{0.1}{0.335}{4}{blue};
			
			\legend{$M=0$,$M=1$, $M=2$,WADG}
			\end{loglogaxis}
			\end{tikzpicture}
		
	}
	\hskip 2ex
	\subfloat[Convergence for $N=5$]{
		\begin{tikzpicture}
		\begin{loglogaxis}[
		width=0.48\textwidth,
		height=0.42\textwidth,
		title style = {font=\normalsize},
		xlabel= Mesh size,
		ticklabel style = {font=\tiny},
		xlabel style={font=\footnotesize},
		ylabel style={font=\tiny},
		legend style={font=\tiny},
		legend pos = south east
		]
		\addplot[color=red,mark=square*] coordinates {
			(0.5,0.1574265)
			(0.25, 0.027776 )
			(0.125,0.0070135644)
			(0.0625,0.001763477)
			(0.0312,0.000441477)
		};

		\addplot[color=blue,mark=otimes*] coordinates {
			(0.5,0.00581413)
			(0.25, 0.0004768)
			(0.125,0.0000333)
			(0.0625, 0.00000230821)
			(0.0312,0.000000148265632)
		};
		
		\addplot[color=red,mark=triangle*] coordinates {
			(0.5,0.0010914511)
			(0.25, 0.0000197825)
			(0.125,0.00000048003)
			(0.0625, 0.000000015234892)
			(0.0312,0.000000000434665)
		};
		
		\addplot[color=blue,mark=diamond*] coordinates {
			(0.5,0.000268372555936635)
			(0.25,0.000002958830746112)
			(0.125,0.000000036427129667)
			(0.0625,0.000000000587820564)
			(0.0312,0.000000000010006686)
		};

		\addplot[color=red,mark=x] coordinates {
			(0.5,0.0002451 )
			(0.25, 0.0000024406 )
			(0.125,0.0000000282596)
			(0.0625,0.000000000473564581)
			(0.0312,0.000000000008391760)
		};
		
		\logLogSlopeTriangle{0.2}{0.1}{0.705}{2}{red};
		\logLogSlopeTriangle{0.2}{0.1}{0.425}{4}{blue};
		\logLogSlopeTriangle{0.2}{0.1}{0.22}{5}{red};
		\logLogSlopeTriangle{0.2}{0.1}{0.08}{6}{blue};
		\legend{$M=0$,$M=1$, $M=2$,$M=3$,WADG}
		\end{loglogaxis}
		\end{tikzpicture}
	}
	\caption[Convergence to the manufactured solution under mesh refinement (2D)]{Convergence under mesh refinement (2D)}
	\label{fig:con2d}
\end{figure}
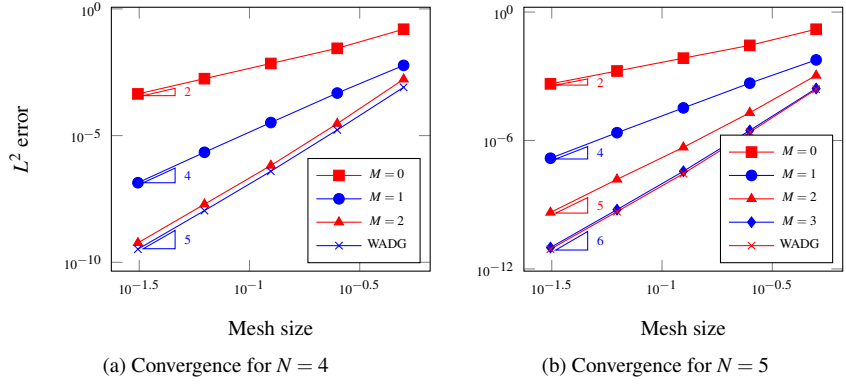

\begin{figure}
	\centering
	\subfloat[Convergence for $N=4$]{
		\begin{tikzpicture}
		\begin{loglogaxis}[
		width=0.48\textwidth,
		height=0.42\textwidth,
		xlabel= Mesh size,
		ylabel= $L^2$ error,
		ticklabel style = {font=\tiny},
			xlabel style={font=\footnotesize},
			ylabel style={font=\footnotesize},
		legend style={font=\tiny},
		legend pos = south east
		]
		\addplot[color=red,mark=square*] coordinates {
			
			(0.5,0.1425)
			(0.25, 0.1181)
			(0.125,0.0267)
			(0.0833,0.012127615353330)
			(0.0625, 0.006888870708627)
			
		};

		\addplot[color=blue,mark=otimes*] coordinates {
			(0.5,0.10194 )
			(0.25,0.007512456)
			(0.125,0.000711573)
			(0.0833,0.00014764)
			(0.0625,0.000047012)
			
		};
		
		\addplot[color=red,mark=triangle*] coordinates {
			(0.5,0.059391879425096 )
			(0.25,0.003057201352859)
			(0.125,0.0000774)
			(0.0833,0.0000095675)
			(0.0625,0.0000021697626)
		};
		
		\addplot[color=blue,mark=x] coordinates {
			(0.5,0.0715124 )
			(0.25,0.0024695)
			(0.125,0.0000733)
			(0.0833,0.0000092115)
			(0.0625,0.00000209026)
		};
		
		\logLogSlopeTriangle{0.18}{0.07}{0.09}{5}{red};
		\logLogSlopeTriangle{0.18}{0.07}{0.685}{2}{red};
		\logLogSlopeTriangle{0.18}{0.07}{0.32}{4}{blue};
		
		\legend{$M=0$,$M=1$, $M=2$,WADG}
		\end{loglogaxis}
		\end{tikzpicture}
		
	}
	\hskip 2ex
\subfloat[Convergence for $N=5$]{
		\begin{tikzpicture}
		\begin{loglogaxis}[
		width=0.48\textwidth,
		height=0.42\textwidth,
		xlabel= Mesh size,
		ticklabel style = {font=\tiny},
		xlabel style={font=\footnotesize},
		ylabel style={font=\tiny},
		legend style={font=\tiny},
		legend pos = south east
		]
		\addplot[color=red,mark=square*] coordinates {
			(0.5,0.184)
			(0.25, 0.118626)
			(0.125,0.026687)
			(0.08333,0.01213)
			(0.0625,0.00688889)
		};

		\addplot[color=blue,mark=otimes*] coordinates {
			(0.5,0.141696594431479)
			(0.25, 0.008195245370250)
			(0.125,0.00071294787)
			(0.0833,0.00014825)
			(0.0625, 0.0000472323)
		};
		
		\addplot[color=red,mark=triangle*] coordinates {
			(0.5,0.05113)
			(0.25, 0.0011656)
			(0.125,0.00002513121)
			(0.08333,0.0000023733)
			(0.0625, 0.0000005026768)
		};
		
		\addplot[color=blue,mark=diamond*] coordinates {
			(0.5,0.048737451042124)
			(0.25,0.00051753)
			(0.125,0.000007582349)
			(0.0833,0.00000059864)
			(0.0625,0.0000001072735)
		};

		\addplot[color=red,mark=x] coordinates {
			(0.5,0.03514 )
			(0.25, 0.00049339)
			(0.125,0.00000724315)
			(0.08333,0.000000573)
			(0.0625,0.00000010271)
		};
		
		\logLogSlopeTriangle{0.18}{0.07}{0.72}{2}{red};
		\logLogSlopeTriangle{0.18}{0.07}{0.44}{4}{blue};
		\logLogSlopeTriangle{0.18}{0.07}{0.185}{5}{red};
		\logLogSlopeTriangle{0.18}{0.07}{0.09}{6}{blue};
		\legend{$M=0$,$M=1$, $M=2$,$M=3$,WADG}
		\end{loglogaxis}
		\end{tikzpicture}
	}
	\caption[Convergence to the manufactured solution under mesh refinement (3D)]{Convergence under mesh refinement (3D).}
	\label{fig:con3d}
\end{figure}
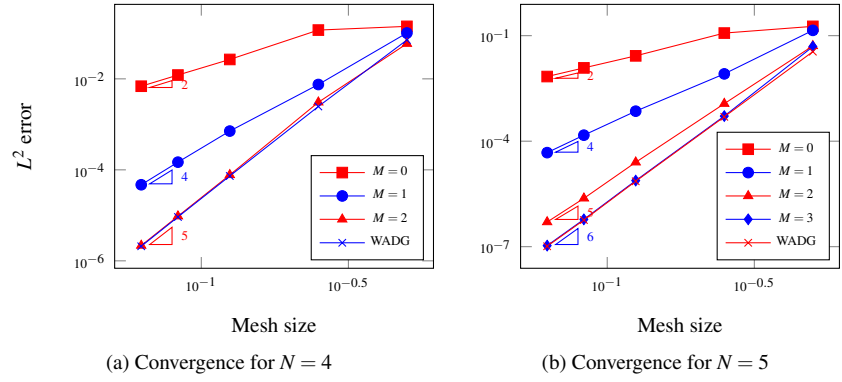

\subsection{Wavespeed with different frequencies}\label{sec:wavespeed}

\note{Since the accuracy of the polynomial approximation of the wavespeed depends on $M$, we examine how the error depends on the approximability of $c^2$.}
We test BBWADG using the following wavespeeds
\begin{equation}
\begin{split}
c^2(x,y) &= 1+\frac{1}{2}\sin(k\pi x)\sin(k\pi y)\qquad \textmd{(2D)},\\
c^2(x,y,z) &= 1+\frac{1}{2}\sin(k\pi x)\sin(k\pi y)\sin(k\pi z)\qquad \textmd{(3D)},
\end{split}
\label{eq:wavespeedk}
\end{equation}
with different frequencies $k$.  \note{However, the manufactured solution remains the same independently of $k$.  } 

\note{This experiment is intended to show how the error depends on the approximability of the wavespeed.  For higher $k$, $c^2$ is more oscillatory and harder to approximate; thus, we expect that the error should increase as $k$ increases, despite the fact that the exact solution is independent of $k$. } 

\begin{figure}
	\centering
	\subfloat[2D convergence]{
		\begin{tikzpicture}
		\begin{semilogyaxis}[
		width=0.48\textwidth,
		height=0.42\textwidth,
		title style = {font=\large},
			xlabel= Degree $M$,
			ylabel= $L^2$ error,
			ticklabel style = {font=\tiny},
			xlabel style={font=\footnotesize},
		    ylabel style={font=\footnotesize},
			legend style={font=\tiny},
			legend pos = south west
			]
			\addplot[color=red,mark=square*] coordinates {
				(0,0.0023828)
				(1,0.00005573332)
				(2,0.0000202349573)
				(3,0.000019581327)
				(4,0.000019602443533)
				(5,0.000019603148488)
				(6,0.0000196031717)
			};
			
			
			
			
			
			\addplot[color=blue,mark=triangle*] coordinates {
				(0,0.005030208127711)
				(1,0.001472601980448)
				(2,0.000560531893102)
				(3,0.000186105423987)
				(4,0.000105283023555)
				(5,0.000109464076391)
				(6,0.000110027427495)
				
			};
			
			\addplot[color=red,mark=otimes*] coordinates {
				(0,0.003751)
				(1,0.0025034)
				(2,0.0017637)
				(3,0.00106317)
				(4,0.00051223336)
				(5,0.00049507257)
				(6,0.0004976484)
				
			};
			
			\addplot[color=blue,mark=diamond*] coordinates {
				(0,0.007091627)
				(1,0.0053486456)
				(2,0.0035956324)
				(3,0.003192266)
				(4,0.00194088267)
				(5,0.0012955886)
				(6,0.0011150673)
				
			};
			\legend{$k=1$,$k=4$,$k=8$,$k=12$}
			\end{semilogyaxis}
		\end{tikzpicture}
		
	}
	\hskip 2ex
	\subfloat[3D convergence]{
		\begin{tikzpicture}
		\begin{semilogyaxis}[
		width=0.48\textwidth,
		height=0.42\textwidth,
		title style = {font=\normalsize},
			xlabel= Degree $M$,
			ticklabel style = {font=\tiny},
			xlabel style={font=\footnotesize},
			ylabel style={font=\tiny},
			legend style={font=\tiny},
			legend pos = south west
		]
			\addplot[color=red,mark=square*] coordinates {
				(0,0.001595283171099 )
				(1,0.000002556506788)
				(2,0.000000018102602)
				(3,0.000000000443356)
				(4,0.000000000003785 )
				(5,0.000000000001831)
				(6,0.000000000001830)
				(7,0.000000000001830)
				
			};
			
			\addplot[color=blue,mark=triangle*] coordinates {
				(0,0.006140566758328 )
				(1,0.000101913479268)
				(2,0.000001986772006)
				(3,0.000000066620022)
				(4,0.000000002739337 )
				(5,0.000000000523401)
				(6,0.000000000504684)
				(7,0.000000000504730)
				
			};
			
			\addplot[color=red,mark=otimes*] coordinates {
				(0,0.023110736206461 )
				(1,0.000983621006701)
				(2,0.000011805156386)
				(3,0.000006014265954)
				(4,0.000000268741911)
				(5,0.000000477608110 )
				(6,0.000000456821622 )
				(7,0.000000457677646)
				
			};
			
			\addplot[color=blue,mark=diamond*] coordinates {
				(0,0.038425099044936)
				(1,0.006702071057236)
				(2,0.000673205091473 )
				(3,0.000057339560940)
				(4,0.000003524433369 )
				(5,0.000001210259573)
				(6,0.000001128494508)
				(7,0.000001126682524)
				
			};
			
			\legend{$k=1$,$k=4$, $k=8$,$k=12$}
			\end{semilogyaxis}
		\end{tikzpicture}
	}
	\caption{Convergence of $L^2$ error when approximating wavespeeds given by (\ref{eq:wavespeedk}).}
	\label{fig:wavespeed}
\end{figure}
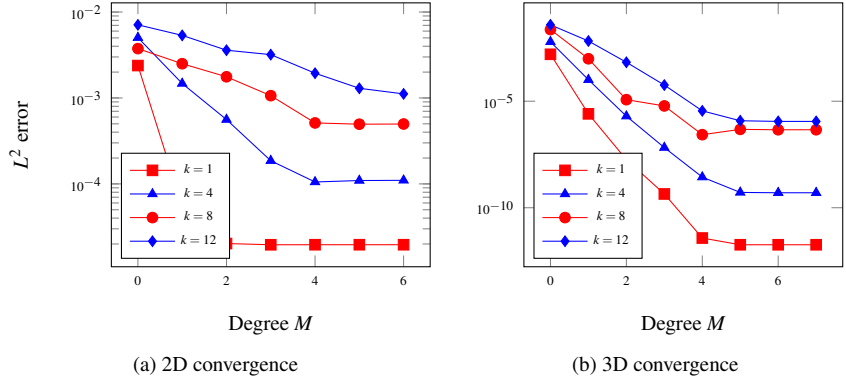

We compute $L^2$ errors on a fixed mesh for various choices of $k$, choose $N=7$ and a uniform mesh with $h=0.0625$ for 2D experiments, and choose $N=6$ and a uniform mesh with $h=0.0833$ for 3D experiments.  From Fig.~\ref{fig:wavespeed}, we observe that, for a fixed $M$, the accuracy of the method \note{does indeed } depend on the frequency of wavespeed: the lower frequency is (or the smaller $k$ is), the smaller the error, despite the fact that the solution remains the same for all $k$.

\subsection{Runtime comparisons}\label{sec:runtime}
In this section, we present runtime comparisons between BBWADG and quadrature-based WADG \note{for $M=1$ and  $M=2$}.  
In Section~\ref{sec:bbalgo}, we showed that the computational complexity of BBWADG is $O(N^{d+1})$ \note{for a fixed $M$}.  In this section, we will verify that this complexity is observed in practice, \note{though the constant depends on $M$}. All results are run on an Nvidia GTX 980 GPU, and the solvers are implemented in the Open Concurrent Compute Abstraction framework (OCCA) \cite{medina2014occa} for clarity and portability.

\subsubsection{Computational implementation}
A time-explicit DG scheme consists of the evaluation of the right hand side and the solution update. Its implementation is typically divided into three kernels. 
\begin{itemize}
	\item A volume kernel, which evaluates contributions to the right hand side resulting from volume terms in (\ref{eq:WADGform}). Specifically, the volume kernel evaluates derivatives of local solutions over each element. 
	
	\item A surface kernel, which evaluates numerical fluxes and contributions to the right hand side resulting from the surface terms in (\ref{eq:WADGform}). More specifically, the surface kernel computes numerical fluxes and applies the lift matrix. 
	
	\item An update kernel, which updates the solution in time. We use a low-storage 4th order Runge-Kutta method \cite{carpenter1994fourth} in this thesis. 
\end{itemize}
We adopt the same volume and surface kernels from \cite{chan2017gpu}. BBWADG and WADG are implemented within the update kernel by modifying the right hand side computed in the volume and surface kernels.

\subsubsection{Acoustic wave equations}
In this experiment, we apply both BBWADG and WADG to the acoustic wave equation (\ref{eq:awave}).  
\note{Runtimes for the update kernels} are given in Fig.~\ref{fig:acoustime}.  \note{The case of $M=1$ is denoted by BBWADG-1, while $M=2$ is denoted by BBWADG-2.} 

\begin{figure}
	\centering
	\begin{tikzpicture}
	\begin{loglogaxis}[
	width=0.55\textwidth,
	height=0.45\textwidth,
	xlabel=Degree $N$,
	ylabel=Runtime (s),
	xlabel style={font=\footnotesize},
	ylabel style={font=\footnotesize},
	xtick = data,
	ticklabel style = {font=\tiny},
	xticklabels={$2$,$3$,$4$,$5$,$6$,$7$,$8$,$9$,$10$},
	legend pos = north west,
	legend style={nodes={scale=0.5, transform shape}}, 
	]
	
	\addplot[color=red,mark=triangle*,mark size=1.5pt] coordinates {
		(2,0.0000000111154)
		(3,0.0000000208683)
		(4,0.0000000331933)
        	        (5,0.0000000656481)
		(6,0.000000085396)
		(7,0.000000135523)
		(8,0.000000165381)
		(9,0.000000266412)
	(10,0.000000405157)
	};

	\addplot[color=red,mark=triangle,mark size=1.5pt] coordinates {
		(2,0.00000002055)
		(3,0.0000000359944)
		(4,0.0000000646931)
		(5,0.0000000967405)
		(6,0.000000150662)
		(7,0.000000196537)
		(8,0.000000284467)
		(9,0.000000416119)
		(10,0.00000056752)
	};
		
	\addplot[dashed, color=red,mark=triangle*,mark size=1.5pt] coordinates {
		(2,0.00000000032)
		(3,0.00000000162)
		(4,0.00000000502)
		(5,0.00000001250)
		(6,0.00000002592)
		(7,0.00000004802)
		(8,0.00000008192)
		(9,0.00000013122)
		(10,0.00000020000)
	};

	\addplot[color=blue,mark=otimes*,mark size=1.5pt] coordinates {
		(2,0.00000000946015)
		(3,0.0000000201553)
		(4,0.0000000490578)
		(5,0.000000120028)
		(6,0.000000219009)
		(7,0.000000487013)
		(8,0.00000525304)
	};
	
	\addplot[dashed, color=blue,mark=diamond*,mark size=1.5pt] coordinates {
		(2,0.00000000032)
		(3,0.00000000364)
		(4,0.00000002048)
		(5,0.00000007812)
		(6,0.00000023328)
		(7,0.00000058824)
		(8,0.00000131072)
		(9,0.00000265720)
	};

	\legend{BBWADG-1,BBWADG-2,$N^4$, WADG,$N^6$}
	\end{loglogaxis}
	\end{tikzpicture}
\caption[Per-element runtimes of update kernels on a mesh of 7854 elements (acoustic)]{Per-element runtimes of update kernels using BBWADG and WADG on a mesh of 7854 elements (acoustic).}
\label{fig:acoustime}
\end{figure}
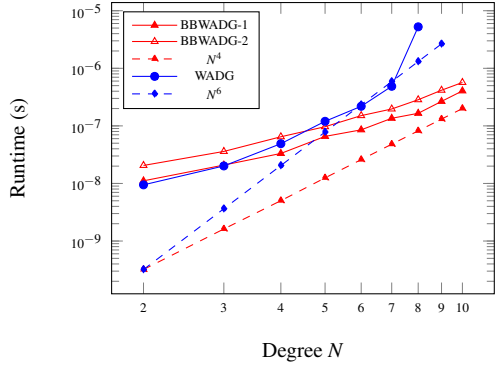

\begin{table}[H]
	\centering
	\begin{tabular}{|c||c|c|c|c|c|c|c|} 
		\hline
		& $N=3$ & $N=4$ & $N=5$ & $N=6$ & $N=7$ & \textcolor{black}{$N=8$} \\
		\hline
		WADG  &          2.02e-8  &  4.91e-8 &   1.20e-7 &  2.19e-7 &  4.87e-7  &  \textcolor{black}{5.25e-6}\\
		\hline
		BBWADG-1 &  2.09e-8  &  3.32e-8 &   6.56e-8 &   8.54e-8 &  1.35e-7 & 1.65e-7 \\
		\hline
		Speedup &    0.9665 &    1.4789 &    1.8292 &    2.5644 &   3.6074 &   \textcolor{black}{31.8182}\\
		\hline
	\end{tabular}  
\caption{Achieved speedup for $M=1$}
\label{tab:am1}
\end{table}
\begin{table}[H]
	\centering
	\begin{tabular}{|c||c|c|c|c|c|c|c|} 
		\hline
		& $N=3$ & $N=4$ & $N=5$ & $N=6$ & $N=7$ & \textcolor{black}{$N=8$} \\
		\hline
		WADG  &          2.02e-8  &  4.91e-8 &   1.20e-7 &  2.19e-7 &  4.87e-7  &  \textcolor{black}{5.25e-6}\\
		\hline
		BBWADG-2 &  3.60e-8  &  6.47e-8 &   9.67e-8 &   1.51e-7 &  1.97e-7 & 2.84e-7 \\
		\hline
		Speedup &    0.5611 &    0.7589 &     1.2409 &    1.4503 & 2.4721 &  \textcolor{black}{18.4859}\\
		\hline
	\end{tabular}  
\caption{Achieved speedup for $M=2$}
\label{tab:am2}
\end{table}

From Fig.~\ref{fig:acoustime}, we observe that BBWADG is more expensive than WADG for low orders $(N < 4)$. However, runtime of the WADG update kernel increases more rapidly with $N$, displaying an asymptotic complexity of $O(N^6)$. On the other hand, the runtime of the BBWADG update kernel increases more slowly and displays a complexity of $O(N^4)$ as proven in Section~\ref{sec:bbalgo}. 

Table~\ref{tab:am1} displays observed speedups of BBWADG over WADG.  We find that for $N=7$, the BBWADG update kernel  for $M=1$ achieves a 3.6 times speedup over the WADG update kernel.  For $N=8$, we observe an unexpected over 30 times speedup.  However, we should note that this result is due to the use of different quadratures between $N=7$ and $N=8$. We choose a tetrahedral quadrature from Xiao and Gimbutas \cite{xiao2010numerical} \note{which is exact for degree $2N+1$ polynomials} for $N\leq 7$. \note{For $N = 8$, this implies that the quadrature rule should be exact for polynomials of degree $17$.  However, the publicly available quadrature rules are only exact up to degree $15$ polynomials. }
Because optimized quadrature points were not publicly available for $N>7$, we \note{switch to} a collapsed coordinate quadrature \cite{Karniadakis.G1999} for $N>7$ (see Fig.~\ref{fig:quadrature}). Since the construction of quadrature points is different, one should not compare results for degrees $N\leq 7$ with degrees $N>7$. 

Table~\ref{tab:am2} shows observed speedups for $M=2$.  We observe that the BBWADG update kernel for $M=2$ is slower than WADG until $N = 5$.  This is due to several reasons. \note{First, increasing from $M=1$ to $M=2$ does not change the overall computational complexity with respect to $N$, but it does change the constant, which scales as $O(M^d)$.}  
\note{Secondly, for $M=1$, since we know \textit{a-priori} that the sparse matrices involved in polynomial multiplication contain only $d+1 = 4$ nonzeros per row in 3D,} we can store such matrices using \verb+float4+ and \verb+int4+ data structures, which have a slightly faster access time on GPUs \cite{chan2017gpu}.  This convenient storage structure is not available for $M=2$. 


\begin{figure}
	\centering
	\subfloat[$N=7$ quadrature]{
		\includegraphics[width=3cm]{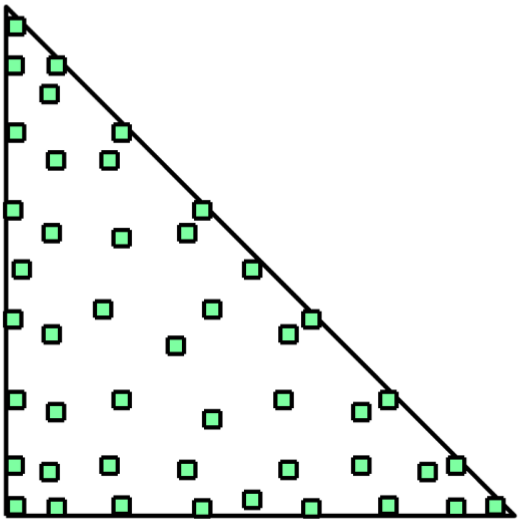}
	}\hskip 5ex 
	\subfloat[$N=8$ quadrature]{
		\includegraphics[width=3cm]{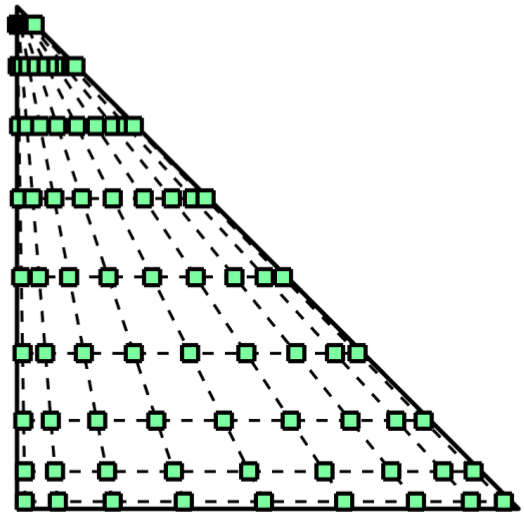}
	}
	\caption[Visualization of the quadrature points on the reference triangle]{Visualization of the quadrature points on the reference triangle}
\label{fig:quadrature}
\end{figure}

\subsubsection{Elastic wave equation}
In this experiment, we compute runtimes for both BBWADG and WADG applied the elastic wave equations (\ref{eq:ewave}).  Computational results for $M=1$ and $M=2$ are presented in Fig.~\ref{fig:elastime}.

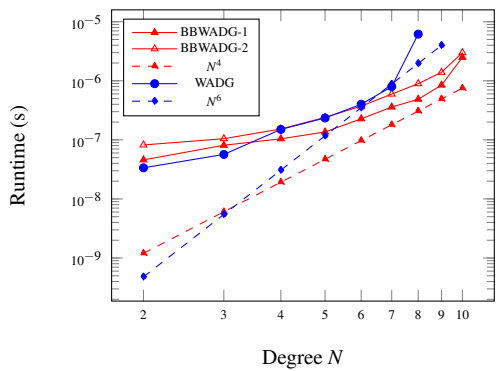
\begin{figure}[]
	\centering
		\begin{tikzpicture}
		\begin{loglogaxis}[
		width=0.55\textwidth,
		height=0.45\textwidth,
		xlabel=Degree $N$,
		ylabel=Runtime (s),
		xlabel style={font=\footnotesize},
		ylabel style={font=\footnotesize},
		xtick = data,
		ticklabel style = {font=\tiny},
		xticklabels={$2$,$3$,$4$,$5$,$6$,$7$,$8$,$9$,$10$},
		legend pos = north west,
	legend style={nodes={scale=0.5, transform shape}}, 
		]

		\addplot[color=red,mark=triangle*,mark size=1.5pt] coordinates {
			(2,0.0000000457983)
		(3,0.0000000809778)
		(4,0.000000104265)
		(5,0.00000013602)
		(6,0.000000227247)
		(7,0.000000359116)
		(8,0.000000487713)
		(9,0.00000084492)
		(10,0.00000249624)
		};
		
		\addplot[color=red,mark=triangle,mark size=1.5pt] coordinates {
		
		(2,0.0000000819964)
		(3,0.000000104635)
		(4,0.000000152827)
		(5,0.000000240527)
		(6,0.000000371963)
		(7,0.000000595837)
		(8,0.000000894296)
		(9,0.00000139124)
		(10, 0.00000300378)
	};
		
		\addplot[dashed, color=red,mark=triangle*,mark size=1.5pt] coordinates {
			(2,0.00000000120)
			(3,0.00000000607)
			(4,0.00000001920)
			(5,0.00000004687)
			(6,0.00000009720)
			(7,0.00000018007)
			(8,0.00000030720)
			(9,0.00000049207)
			(10,0.00000075000)
		};

		\addplot[color=blue,mark=otimes*,mark size=1.5pt] coordinates {
			(2,0.0000000336389)
		(3,0.0000000566972)
		(4,0.000000150267)
		(5,0.000000234543)
		(6,0.000000401095)
		(7,0.000000796524)
		(8,0.00000616693)		};
		
		\addplot[dashed, color=blue,mark=diamond*,mark size=1.5pt] coordinates {
			(2,0.00000000048)
			(3,0.00000000547)
			(4,0.00000003072)
			(5,0.00000011719)
			(6,0.00000034992)
			(7,0.00000088237)
			(8,0.00000196608)
			(9,0.00000398580)
		};

		\legend{BBWADG-1,BBWADG-2,$N^4$, WADG,$N^6$}
		\end{loglogaxis}
		\end{tikzpicture}
	\caption
	{Per-element runtimes for update kernels using BBWADG and WADG on a mesh of 7854 elements (elastic).}
\label{fig:elastime}
\end{figure}

We observe that the runtime behaves similarly to the acoustic case. The runtime of the BBWADG update kernel increases roughly as $O(N^4)$ up to $N = 8$, with about a 2.2 times speedup achieved for $M=1$ and $N=7$.  However, the runtime of BBWADG increases more rapidly than $O(N^4)$ for $N > 8$.  We expect that this is due to GPU occupancy/memory effects.  

\note{The application of $\tilde{\bm{P}}_N$ described in Section~\ref{sec:polygpu} requires storage of $(N+1)$ intermediate values per thread for each application of a scalar weight-adjusted inverse mass matrix.  For the scalar acoustic wave equation, this additional storage is negligible, as only a single weight-adjusted inverse mass matrix is applied per element.  However, for the elastic wave equation, we apply a matrix-weighted weight-adjusted inverse mass matrix, which is computed by applying multiple scalar weight-adjusted inverse mass matrices and combining the results.  For elastic wave propagation in 3D, this increases the per-thread memory cost by a factor of $6$ (corresponding to each of the six components of the elastic stress tensor), resulting in significant register pressure and reduced GPU occupancy.}.

\note{This additional storage can be decreased by processing fewer components simultaneously; however, processing fewer components simultaneously also reduces data reuse and temporal locality.  It is not immediately clear whether this approach will result in an overall lower runtime, and will be the subject of future investigation.}

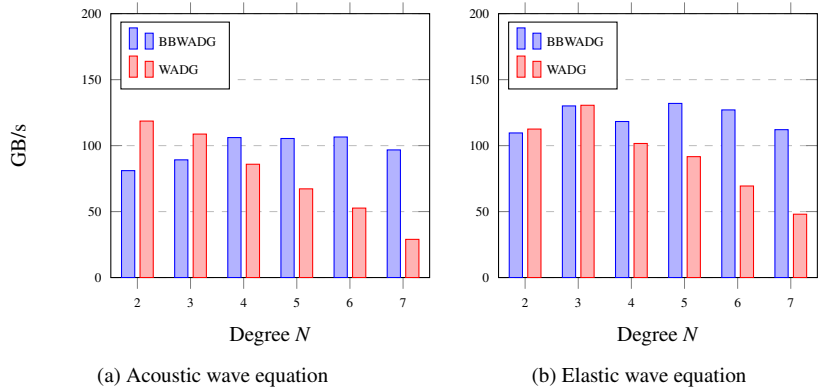
\begin{figure}
	\centering
	\subfloat[Acoustic wave equation]{
		\begin{tikzpicture}
		\begin{axis}[
		width=0.48\textwidth,
		height=0.42\textwidth,
		legend cell align=left,
		xlabel={Degree $N$},
		ylabel={GB/s},
		xmin=1.5, xmax=7.5,
		ymin=0,ymax=200,
		xlabel style={font=\footnotesize},
		ylabel style={font=\footnotesize},
		ticklabel style = {font=\tiny},
		ybar,
		bar width=5pt,
		xtick={2,3,4,5,6,7},
		legend style={font=\tiny},
		legend pos=north west,
		ymajorgrids=true,
		grid style=dashed,
		] 
		\addplot coordinates{(2,81.024)(3,89.2)(4,106.08)(5,105.383)(6,106.517)(7,96.722)};
		\addplot coordinates{(2,118.585)(3,108.734)(4,85.866)(5,67.216)(6,52.638)(7,28.941)};
		\legend{BBWADG, WADG}
		\end{axis}
		\end{tikzpicture}}
	\hskip 2ex
	\subfloat[Elastic wave equation]{
		
		\begin{tikzpicture}
		\begin{axis}[
		width=0.48\textwidth,
		height=0.42\textwidth,
		legend cell align=left,
		xlabel={Degree $N$},
		xmin=1.5, xmax=7.5,
		ymin=0,ymax=200,
		xlabel style={font=\footnotesize},
		ylabel style={font=\tiny},
		ticklabel style = {font=\tiny},
		ybar,
		bar width=5pt,
		xtick={2,3,4,5,6,7},
		legend style={font=\tiny},
		legend pos=north west,
		ymajorgrids=true,
		grid style=dashed,
		] 
		\addplot coordinates{(2,109.584)(3,130.066)(4,118.261)(5,131.968)(6,127.042)(7,112.072)};
		\addplot coordinates{(2,112.53)(3,130.565)(4,101.607)(5,91.625)(6,69.381)(7,48)};

		\legend{BBWADG,WADG}
		\end{axis}
		\end{tikzpicture}
		
	}
	\caption{Achieved bandwidth (GB/s) for update kernels using BBWADG and WADG.}
	\label{fig:bdw}
	
\end{figure}
\subsection{Performance analysis}\label{sec:perform}
In this section, we present computational results for BBWADG with $M=1$ and WADG. Fig.~\ref{fig:bdw} and Fig.~\ref{fig:acp} show the profiled computational performance and bandwidth of the BBWADG and WADG update kernels. From Fig.~\ref{fig:bdw}, we observe that the bandwidth of the WADG update kernel decreases steadily as $N$ increases. In comparison, the BBWADG update kernel sustains a near-constant bandwidth as $N$ increases. 
From Fig.~\ref{fig:acp}, we can see that, for all $N$, the BBWADG update kernel achieves a lower computational performance compared to the WADG update kernel. These results are similar to those achieved for BBDG with piecewise constant wavespeeds \cite{chan2017gpu}.

\begin{figure}
	\centering
	\subfloat[Acoustic wave equation]{
		\begin{tikzpicture}
		\begin{axis}[
		width=0.48\textwidth,
		height=0.42\textwidth,
		legend cell align=left,
		xlabel={Degree $N$},
		ylabel={TFLOPS/s},
		xmin=1.5, xmax=7.5,
		ymin=0,ymax=0.5,
		xlabel style={font=\footnotesize},
		ylabel style={font=\footnotesize},
		ticklabel style = {font=\tiny},
		ybar,
		bar width=5pt,
		xtick={2,3,4,5,6,7},
		legend style={font=\tiny},
		legend pos=north west,
		ymajorgrids=true,
		grid style=dashed,
		] 
		
		\addplot coordinates{(2,0.054676259)(3,0.071267548)(4,0.092135266)(5,0.09482777)(6,0.102449165)(7,0.096437048)};
		\addplot coordinates{(2,0.10036216)(3,0.176223454)(4,0.260597492)(5,0.323972993)(6,0.390148524)(7,0.32094259)};
		\legend{BBWADG, WADG}
		\end{axis}
		\end{tikzpicture}}
	\hskip 2ex
	\subfloat[Elastic wave equation]{
		
		\begin{tikzpicture}
		\begin{axis}[
		width=0.48\textwidth,
		height=0.42\textwidth,
		legend cell align=left,
		xlabel={Degree $N$},
		xmin=1.5, xmax=7.5,
		ymin=0,ymax=2,
		xlabel style={font=\footnotesize},
		ylabel style={font=\tiny},
		ticklabel style = {font=\tiny},
		ybar,
		bar width=5pt,
		xtick={2,3,4,5,6,7},
		legend style={font=\tiny},
		legend pos=north west,
		ymajorgrids=true,
		grid style=dashed,
		] 
		\addplot coordinates{(2,0.20589777)(3,0.258967923)(4,0.253688051)(5,0.298471603)(6,0.30287142)(7,0.28015805)};
		\addplot coordinates{(2,0.24670507)(3,0.540583303)(4,0.791899291)(5,1.176795482)(6,1.35113082)(7,1.341861801)};
		
		\legend{BBWADG,WADG}
		\end{axis}
		\end{tikzpicture}
		
	}
	\caption{Achieved computational performance (TFLOPS/s) for update kernels using BBWADG and WADG.}
	\label{fig:acp}
	
\end{figure}
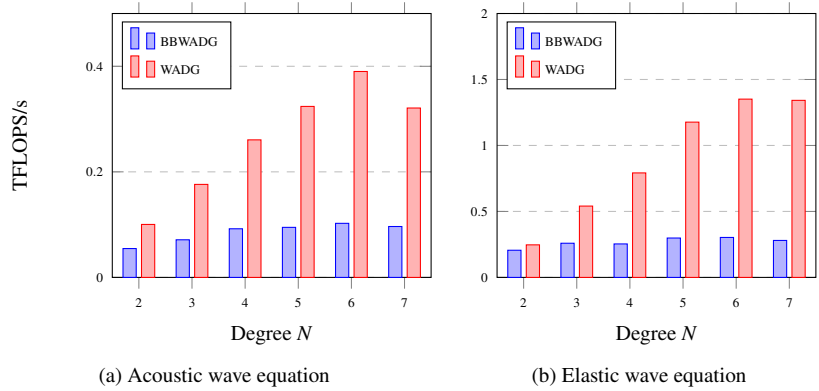

















\section{Conclusion and future work}
In this paper, we present a Bernstein-B\'ezier discontinuous Galerkin (BBWADG) method to simulate acoustic and elastic wave propagation in heterogeneous media \note{based on a polynomial approximation of sub-cell heterogeneities and fast algorithms for Bernstein polynomial multiplication and $L^2$ projection}. \note{ The resulting solver inherits the advantages of WADG (provable energy stability, high order accuracy) while reducing the computational complexity of the update kernel from $O(N^{2d})$ to $O(N^{d+1})$ in $d$ dimensions.  Moreover, this implementation reuses} the BBDG volume and surface kernels from \cite{chan2017gpu}, both of which can be applied in $O(N^d)$ operations.  Thus, the total computational complexity of the BBWADG solver is $O(N^{d+1})$ per timestep \note{for a fixed polynomial approximation of sub-cell media heterogeneities}. 

Due to its low computational complexity, BBWADG offers advantages in simulating wave propagation in heterogeneous media using higher order approximations. These properties make BBWADG promising for accurate and efficient simulation of large-scale wave propagation problems.

\section*{Acknowledgments}
The authors acknowledge the support of the National Science Foundation under awards DMS-1719818 and DMS-1712639.  

\bibliographystyle{model1-num-names}
\bibliography{refs}

\end{document}